\newcommand{\al}{\alpha}
\newcommand{\be}{\beta}
\newcommand{\de}{\delta}
\newcommand{\ep}{\varepsilon}
\newcommand\chars[2]{\left[\begin{smallmatrix}#1\\ #2\end{smallmatrix}\right]}
\newcommand\tc[2]{\theta\chars{#1}{#2}}
\newcommand{\CC}{{\mathbb{C}}}
\newcommand{\PP}{{\mathbb{P}}}
\newcommand{\ZZ}{{\mathbb{Z}}}
\newcommand{\calH}{{\mathcal H}}
\newcommand{\calI}{{\mathcal I}}
\newcommand{\calA}{{\mathcal A}}
\newcommand{\calM}{{\mathcal M}}
\newcommand{\calJ}{{\mathcal J}}
\newcommand{\calD}{{\mathcal D}}
\newcommand{\calR}{{\mathcal R}}
\newcommand{\op}{\operatorname}
\newcommand{\Sp}{\op{Sp}}
\newcommand\diag{\operatorname{diag}}
\theoremstyle{plain}
\newtheorem{thm}{Theorem}
\newtheorem*{mthm*}{Main Theorem}
\newtheorem{lm}[thm]{Lemma}
\newtheorem{prop}[thm]{Proposition}
\newtheorem{cor}[thm]{Corollary}
\theoremstyle{definition}
\newtheorem{rem}[thm]{Remark}
\newtheorem{ntn}[thm]{Notation}
\title{An explicit solution to the weak Schottky problem}
\author[H.M. Farkas]{Hershel M. Farkas}
\address{Department of Mathematics, Hebrew University, Jerusalem 91904, Israel}
\email{farkas@ma.huji.ac.il}
\author[S. Grushevsky]{Samuel Grushevsky}
\address{Mathematics Department, Stony Brook University, Stony Brook, NY 11794-3651, USA}
\email{sam@math.stonybrook.edu}
\thanks{Research of the second author is supported in part by National Science Foundation under the grant DMS-18-02116.}
\thanks{Research of the third  author  is  very partially supported by PRIN 2015 Moduli spaces and Lie Theory and progetto di ateneo 2015 Moduli, deformazioni e superfici K3}
\author[R. Salvati Manni]{Riccardo Salvati Manni}
\address{Dipartimento di Matematica, Piazzale Aldo Moro, 2, I-00185 Roma, Italy}
\email{salvati@mat.uniroma1.it}
\begin{document}
\begin{abstract}
We give an explicit weak solution to the Schottky problem, in the spirit of Riemann and Schottky. For any genus $g$, we write down a collection of polynomials in genus $g$ theta constants, such that their common zero locus contains the locus of Jacobians of genus $g$ curves as an irreducible component. These polynomials arise by applying a specific Schottky-Jung proportionality to an explicit collection of quartic identities for genus $(g-1)$ theta constants.
\end{abstract}
\maketitle
\section{Introduction}
We work throughout over the field of complex numbers. Our main result is the following explicit weak solution to the classical Schottky problem.
\begin{mthm*} \label{main}
For any $g\ge 4$, let $S_{34}$ denote the following degree $2^{3\cdot 2^{g-4}+1}$ polynomial in genus $g$ theta constants, evaluated at some period matrix $\tau$:
$$
\prod_{\substack{a_\ep,b_\ep,c_\ep=\pm 1\\ a_{0,\ldots,0}=1}}\ \sum_{\ep \in(\ZZ/2\ZZ)^{g-4}}
a_\ep\left(\tc{E & 0 & 0 & 0&\ep }{0 & 0 & 0 & 0&{\bf 0} }\tc{E & 0 & 0 & 0&\ep }{ 1 & 1 & 1 & 1&{\bf 1} }
\tc{E & 0 & 1 & 1&\ep }{0 & 1 & 0 & 0&{\bf 0} }\tc{E & 0 & 1 & 1&\ep }{1 & 0 & 1 & 1&{\bf 1} }\right.\cdot
$$
\vskip-4mm
$$
\begin{aligned}
&\left.\cdot\,\tc{1+E & 1 & 0 & 0&\ep }{0 & 0 & 0 & 1&{\bf 0} }\tc{1+E & 1 & 0 & 0&\ep }{ 1 & 1 & 1 & 0&{\bf 1} }
\tc{1+E & 1 & 1 & 1&\ep }{0 & 1 & 0 & 1&{\bf 0} }\tc{1+E & 1 & 1 & 1&\ep }{1 & 0 &  1 &  0&{\bf 1} }\right)^{1/2}\\
+&b_\ep\left(\tc{1+E & 0 & 1 & 0&\ep }{0 & 0 & 0 & 0&{\bf 0} }\tc{1+E & 0 & 1 & 0&\ep }{1 & 1 & 1 & 1&{\bf 1} }
\tc{1+E &  0 & 0 & 1&\ep }{0 & 1 & 0 & 0&{\bf 0} }\tc{1+E & 0 & 0 & 1&\ep }{1 & 0 & 1 & 1&{\bf 1} }\right.\cdot\\
&\ \ \left.\cdot\,\tc{E & 1 & 1 & 0&\ep }{0 & 0 & 0 & 1&{\bf 0} }\tc{E & 1 & 1 & 0&\ep }{1  & 1 & 1 & 0&{\bf 1} }
\tc{E & 1 & 0 & 1&\ep }{0 & 1 & 0 & 1&{\bf 0} }\tc{E & 1 & 0 &1&\ep }{1 & 0 & 1 & 0&{\bf 1} } \right)^{1/2}\\
+&c_\ep \left(\tc{E & 0 & 0 & 0&\ep }{0 & 0 & 1 & 1&{\bf 0} }\tc{E & 0 & 0 & 0&\ep }{1 & 1 & 0 & 0&{\bf 1} }
\tc{E & 0 & 1 & 1&\ep }{0 & 1 & 1 & 1&{\bf 0} }\tc{E & 0 & 1 & 1&\ep }{1 & 0 & 0 & 0&{\bf 1} }\right.\cdot\\
&\ \ \left.\cdot\,\tc{1+E & 1 & 0 & 0&\ep }{0 & 0 & 1 & 0&{\bf 0} }\tc{1+E & 1 & 0 & 0&\ep }{1 & 1 & 0 & 1&{\bf 1} }
\tc{1+E & 1  & 1 & 1&\ep }{0 & 1  & 1 & 0&{\bf 0} }\tc{1+E & 1 & 1 & 1&\ep }{1 & 0 & 0 & 1&{\bf 1} }\right)^{1/2},
\end{aligned}
$$
where $E:=\ep_1+\ldots+\ep_{g-4}\in\ZZ/2\ZZ$.
For any $3\le j<k\le g$ let $S_{jk}$ be obtained from $S_{34}$ by swapping columns $3$ and $j$, and columns $4$ and $k$ of the characteristics of all theta constants appearing in the expression.

Then the collection of equations $\lbrace S_{jk}=0\rbrace_{3\le j<k\le g}$ gives a weak solution to the Schottky problem, i.e.~the common zero locus of the modular forms $\lbrace S_{jk}\rbrace_{3\le j<k\le g}$ contains the Jacobian locus as an irreducible component.
\end{mthm*}
Here, and throughout the paper, we write $\bf 0$ and $\bf 1$ for strings of zeroes or ones of appropriate length.

\medskip
One can say that the theory we deal with here began with Riemann's papers~\cite{riemann1} and~\cite{riemann2}. In~\cite{riemann2} in particular it seems clear that Riemann was working towards understanding what we now think of as the Schottky problem, even though the main immediate application was a proof of the Jacobi inversion theorem. The field then blossomed, with a flurry of activity by A.~Krazer, W.~Wirtinger, M.~Noether, F.~Schottky, G.~Frobenius, H.~Baker, and many others --- see the many references in~\cite{rafabook}. In the middle of the 20th century the interest in the Schottky problem seems to have waned, probably due to the fact that
the length of the identities increased exponentially, and not much new was discovered on the classical Schottky problem. The interest in the  subject was then rekindled in the 1970s in particular by Rauch's rediscovery  of~\cite{scju}, where what are now called the Schottky-Jung proportionalities were stated without proof. These were proven rigorously by the first author~\cite{fasc}, and their connection with the Schottky problem was discussed in~\cite{farascju}. A period of intense activity followed, including Mumford's development of the algebraic theory of the theta function, and integrable systems entering the picture.

Various approaches to the Schottky problem were developed by the 1980s, and various geometric solutions to the problem were then obtained (surveyed eg.~in~\cite{vgsurvey,grschottky}). Igusa~\cite{igusagen4} and Freitag~\cite{freitaggen4} showed that Schottky's original equation~\cite{schottky} gives indeed the solution to the Schottky problem in genus 4. Accola~\cite{acc} gave a collection of equations in theta constants that characterize the Jacobian locus in genus 5, up to additional irreducible components.
Arbarello and De Concini~\cite{adc} showed that there exists a finite set of equations in theta constants and their derivatives that characterize Jacobians (however, making them explicit requires elimination of $3g$ complex numbers from a system of equations). Shiota~\cite{shiota} proved Novikov's conjecture characterizing Jacobians by their theta function satisfying the KP equation, and various other approaches were developed.

In a spirit closest to the current paper, van Geemen~\cite{vgeemenscju} and Donagi~\cite{donagiscju} showed that the classical Schottky-Jung approach gave a weak solution to the Schottky problem --- however, their results do not lead to explicit equations, as we will explain shortly.

More recently, Krichever~\cite{krichevertrisecant} proved the celebrated Welters'~\cite{welters} trisecant conjecture, characterizing Jacobians by their Kummer varieties having trisecant lines. We hope that our explicit solution of the weak Schottky problem, motivated by the viewpoint of~\cite{rafabook}, may lead to a further rejuvenation of interest in this classical subject.

\medskip
We now state the Schottky problem more precisely, and motivate our main theorem. Denote by $\calM_g$ the moduli space of curves of genus $g$, denote by $\calA_g$ the moduli space of complex principally polarized abelian varieties (ppav) of dimension $g$,
and denote $J:\calM_g\to\calA_g$ the Torelli morphism. The {\em Schottky problem} is to  characterize {\em the locus of Jacobians} $\calJ_g$, which is defined to be the closure of $J(\calM_g)$ in $\calA_g$.

The {\em classical Schottky problem}, studied by Riemann and Schottky, is to write down the defining equations for$\calJ_g$. More precisely, recall that theta constants with characteristics define an embedding $Th:\calA_g(4,8)\hookrightarrow \PP^{2^{g-1}(2^g+1)-1}$ of the level cover of $\calA_g$ (see the next section for details), and the classical Schottky problem is to determine the defining ideal $\calI_g^J$ of $Th(\calJ_g(4,8))\subset Th(\calA_g(4,8))$. The {\em weak Schottky problem} is the problem of characterizing the locus of Jacobians up to extra irreducible components. Classically, this means finding an ideal $I_g$  of polynomials in theta constants, such that the zero locus of $I_g$ within $Th(\calA_g(4,8))$ contains $Th(\calJ_g(4,8))$ as an irreducible component.

The Schottky problem is non-trivial for $g\ge 4$, and Schottky's original equation solves the classical Schottky problem in genus 4, as discussed above. Despite many approaches to the Schottky problem having been developed, a solution to the classical Schottky problem and its weak version have remained elusive for any genus $g \ge 5$.

\smallskip
The Schottky-Jung proportionalities (reviewed in section 3) relate theta constants of a genus $g$ Jacobian to the theta constants of its Prym, which is a $(g-1)$-dimensional ppav. More precisely, denote by $\calR_g$ the moduli space of connected unramified double covers of smooth genus $g$ curves, thought of as pairs consisting of a curve $C\in\calM_g$ and a non-zero two-torsion point $\eta$ on the Jacobian of $C$. The Prym construction is the map $Pr:\calR_g\to\calA_{g-1}$, and  the Schottky-Jung proportionalities relate the theta constants of $Pr(C,\eta)$ and of $C$.

Let $\calI_g^A$ denote the defining ideal of $Th(\calA_g(4,8))\subset \PP^{2^{g-1}(2^g+1)-1}$. Applying the Schottky-Jung proportionalities to each theta constant appearing in a polynomial $P\in\calI_{g-1}^A$ yields an element $SJ^\eta(P)\in\calI_g^J$, which we will call the corresponding Schottky-Jung identity. The {\em big Schottky} locus is the locus within $Th(\calA_g(4,8))$ defined by the equations $SJ^\eta(P)$ for all $P\in\calI_{g-1}^A$, for {\em one fixed} $\eta$, while the {\em small Schottky} locus is the locus defined by such equations for {\em all possible} $\eta$. Van Geemen~\cite{vgeemenscju} and Donagi~\cite{donagiscju} showed that the small and the big Schottky loci, respectively, give weak solutions to the Schottky problem, while in~\cite{donagiintermjac} Donagi showed that already in genus 5 the big Schottky locus contains an extra irreducible component, containing the locus of intermediate Jacobians of cubic threefolds. In the unpublished preprint~\cite{siegel} it is shown that in genus 5 the small Schottky locus is in fact equal to the Jacobian locus.

Note, however, that for $g\ge 3$ the ideal $\calI_g^A$ of relations among the theta constants of a general ppav is not known --- it is conjectured that it is generated by Riemann's quartic relations (see~\cite{fsm} for details), but no approaches to proving this are available. Thus for any $g\ge 5$ the results of van Geemen and Donagi cannot be made explicit, as one cannot write down a set of generators of $\calI_{g-1}^A$.
Our main theorem is thus the first known explicit weak solution to the classical Schottky problem. After our paper appeared on the arXiv, our equations were investigated numerically in genus 5 by Agostini and Chua~\cite{agch}.

\smallskip
Our equations $S_{jk}$ arise by applying the Schottky-Jung proportionalities to certain quartic identities in theta constants. We note, however, that while the usually applied case of the Schottky-Jung proportionalities is for the two-torsion point $\eta_0:=\chars{0&0&\ldots&0}{1&0&\ldots&0}$, it turns out (see~\Cref{rem:etag}) that for our methods we need to use the two-torsion point $\eta_g:=\chars{0&\ldots&0}{1&\ldots&1}$. The polynomials $S_{jk}$ arise by applying the Schottky-Jung proportionalities for $\eta_g$ to the following quartic identity in theta constants.
\begin{prop}\label{prop:Rsum}
For any $g\ge 3$, let $R_{34}$ be the following quartic polynomial in theta constants, all evaluated at some period matrix $\tau$:
\begin{equation}\label{eq:Rgeng}
\begin{aligned}
R_{34}:=\sum_{\ep\in(\ZZ/2\ZZ)^{g-3}}
&\tc{0 & 0 & 0&\ep}{0 & 0 & 0&{\bf 0}}\tc{0 & 1 & 1&\ep}{1 & 0 & 0&{\bf 0}}
\tc{1 & 0 & 0&\ep}{0 & 0 & 1&{\bf 0}}\tc{1 & 1 & 1&\ep}{1 & 0 & 1&{\bf 0}}\\
-&\tc{0 & 1 & 0&\ep}{0 & 0 & 0&{\bf 0}}\tc{0 & 0 & 1&\ep}{1 & 0 & 0&{\bf 0}}
\tc{1 & 1 & 0&\ep}{0 & 0 & 1&{\bf 0}}\tc{1 & 0 & 1&\ep}{1 & 0 & 1&{\bf 0}} \\
+&\tc{0 & 0 & 0&\ep}{0 & 1 & 1&{\bf 0}}\tc{0 & 1 & 1&\ep}{1 & 1 & 1&{\bf {\bf 0}}}
\tc{1 & 0 & 0&\ep}{0 & 1 & 0&{\bf 0}}\tc{1  & 1 & 1&\ep}{1  & 1 & 0&{\bf 0}}.
\end{aligned}
\end{equation}
Let $R_{jk}$ be obtained by permuting columns $2$ and $j-1$, and $3$ and $k-1$ in the expression of $R_{34}$. Then each $R_{jk}$ vanishes identically in $\tau$, i.e.~$R_{jk}\in\calI_g^A$.
\end{prop}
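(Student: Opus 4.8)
The plan is to reduce to the single identity $R_{34}$ and then to derive it from Riemann's quartic theta relations, which are classically known to vanish identically in $\tau$ (they lie in $\calI_g^A$). For the reduction, let $P$ be the $g\times g$ permutation matrix realizing the column swap that produces $R_{jk}$ from $R_{34}$. By the theta transformation formula one has $\tc{Pa}{Pb}(0,\tau)=\tc{a}{b}(0,P^{\mathsf T}\tau P)$, and since $P^{\mathsf T}\tau P\in\HH_g$ whenever $\tau\in\HH_g$, this gives $R_{jk}(\tau)=R_{34}(P^{\mathsf T}\tau P)$. Hence it suffices to prove that $R_{34}$ vanishes identically on $\HH_g$.

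For $R_{34}$ I would first make the group-theoretic structure of its three summands visible. Put $A:=\{000,011,100,111\}\subset(\ZZ/2\ZZ)^3$, let $\phi\colon A\xrightarrow{\ \sim\ }\langle 100,001\rangle$ be the isomorphism with $\phi(011)=100$ and $\phi(100)=001$, and set $c:=(0,1,0)$, $d:=(0,1,1)$. A direct check identifies the three monomials of $R_{34}$, for each fixed $\ep\in(\ZZ/2\ZZ)^{g-3}$, as
$$\prod_{u\in A}\tc{u,\ \ep}{\phi(u),\ \mathbf 0},\qquad -\prod_{u\in A}\tc{u+c,\ \ep}{\phi(u),\ \mathbf 0},\qquad \prod_{u\in A}\tc{u,\ \ep}{\phi(u)+d,\ \mathbf 0},$$
i.e.\ as signed products over the graph of $\phi$ and over two of its translates. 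This graph is a two-dimensional isotropic subgroup of $(\ZZ/2\ZZ)^{2g}$ for the standard symplectic pairing, so in each monomial the four characteristics lie in one coset of it; in particular their sum is $0$ in $(\ZZ/2\ZZ)^{2g}$, which is exactly the condition permitting Riemann's quartic relation to be applied to each of them. Using that relation in its Hadamard form --- which expresses a product $\prod_{j=1}^4\tc{\al_j}{\be_j}(0,\tau)$ as a signed average, over $\sigma\in(\ZZ/2\ZZ)^g$, of the products $\prod_{j=1}^4\tc{\al_j'+\sigma}{\be_j'}(0,\tau)$ built from the Hadamard transform $(\al_j,\be_j)\mapsto(\al_j',\be_j')$ of the four characteristics --- together with its inverse, and then discarding every resulting term that contains an odd characteristic (those theta constants vanish identically), the surviving terms are again signed products over translates of the same isotropic subgroup. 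I would then fix the coefficients of a suitable finite linear combination of such Riemann relations so that the three prescribed families survive with signs $+1,-1,+1$ and every other term cancels; the summation over $\ep$ is then reproduced automatically from the part of the $\sigma$-sum supported on the last $g-3$ coordinates.

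The main obstacle is exactly this last matching. A single instance of Riemann's relation introduces on the order of $2^g$ further quartic monomials, so that a finite linear combination of several relations is needed rather than a single one --- in the spirit of remark~\ref{rem:noRiemann} --- and the delicate point is the sign and phase bookkeeping: one must verify that, after reducing the Hadamard-transformed characteristics modulo $\ZZ^{2g}$, the quadratic phase factors and the characters occurring in Riemann's relation combine to give the uniform signs $+1,-1,+1$ rather than signs depending on $\ep$, and that each monomial outside the three prescribed families ends up with total coefficient zero. This is a finite and completely explicit computation in $(\ZZ/2\ZZ)^{2g}$; the passage from $g=3$ to general $g$ is cost-free, since the extra $g-3$ coordinates enter every characteristic in the uniform block-diagonal pattern displayed above, so one may equally run the argument by induction on $g$, adjoining one coordinate at a time, or verify the identity directly on Fourier coefficients $e^{\pi i\,\tr(\tau S)}$ by a signed count of the representations $S=\sum_{j=1}^{4}n_j n_j^{\mathsf T}$.
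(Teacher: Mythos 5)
Your reduction of $R_{jk}$ to $R_{34}$ is fine and essentially matches the paper (the paper realizes the column permutation by a symplectic element and uses $\Sp(2g,\ZZ)$-invariance of $\calI_g^A$; your $\tc{Pa}{Pb}(\tau)=\tc{a}{b}(P^{\mathsf T}\tau P)$ formulation is an equivalent way to say the same thing). Your structural identification of the three monomials as products over cosets of the graph of $\phi$ is also correct. But the heart of the proposition --- that $R_{34}$ itself lies in $\calI_g^A$ --- is not proved: you describe a plan ("I would then fix the coefficients of a suitable finite linear combination of such Riemann relations so that the three prescribed families survive with signs $+1,-1,+1$ and every other term cancels") and then explicitly defer the decisive cancellation and sign bookkeeping as "a finite and completely explicit computation." No linear combination is exhibited, no cancellation is verified, and the signs are exactly the delicate point you yourself flag. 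As written, the argument assumes what is to be proved.

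The claim that "the passage from $g=3$ to general $g$ is cost-free" is also where the real work lives. A single Riemann relation in genus $g$ sums over all $2^{2g}$ characteristics $\chars{a}{b}$, whereas $R_{34}$ sums only over top $(g-3)$-characteristics $\ep$ with zero bottom; it is not automatic that the $\sigma$-sum "reproduces" this restricted sum, and the proposed induction "adjoining one coordinate at a time" has no established induction step. The paper supplies precisely this step as Proposition~\ref{prop:doubling}, whose proof rests on two nontrivial inputs: the theorem of \cite{smrelations} that \emph{every} quartic identity in theta constants corresponds to a $-2^{g-1}$-eigenvector of the even block $M^+(g)$ of the Weil-pairing matrix, and the explicit block decomposition (Lemma~\ref{lm:us}) showing that $(X,0,X,0)^{\mathsf T}$ is again such an eigenvector one genus up. Applying this doubling $g-3$ times to the genus~$3$ relation~\eqref{eq:Rgen3} produces exactly the $\ep$-sum in~\eqref{eq:Rgeng}. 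To repair your argument you would either need to import these two inputs (at which point you are reproducing the paper's proof), or actually carry out the explicit linear-combination construction for all $g$, which you have not done.
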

This Proposition is an immediate corollary of the generalized Riemann relations stated in ~\cite{fayriemann}. See~\Cref{rem:noRiemann} for an explanation why the classical Riemann quartic relation cannot be used instead of $R_{34}$ for our argument.

The Schottky-Jung proportionalities for~$\eta_g$ are given explicitly by~\eqref{eq:SJetag}, and since $\eta_g$ is invariant under any permutation of columns (and this is one reason for our choice of $\eta_g$ rather than $\eta_0$), we obtain~\Cref{lm:SR}: the statement that $S_{jk}=SJ^{\eta_g}(R_{jk})$ for any $3\le j<k\le g$. As a corollary, we thus obtain an explicit proof of the main result of Donagi's paper~\cite{donagiscju}, which implies the main result of van Geemen's paper~\cite{vgeemenscju}:
\begin{cor}
The big Schottky locus gives a weak solution to the Schottky problem, i.e.~the common zero locus of $SJ^{\eta_g}(R)$, for all $R\in\calI_{g-1}^A$, contains $\calJ_g$ as an irreducible component.
\end{cor}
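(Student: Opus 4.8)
The plan is to deduce the corollary from the Main Theorem by a purely formal inclusion argument, with no further geometric input. Write $\calS^{\eta_g}\subseteq Th(\calA_g(4,8))$ for the big Schottky locus, i.e.\ the common zero locus of $SJ^{\eta_g}(R)$ over all $R\in\calI_{g-1}^A$, and write $\calS\subseteq Th(\calA_g(4,8))$ for the common zero locus of the finitely many modular forms $\{S_{jk}\}_{3\le j<k\le g}$. The key point is the chain of inclusions
$$Th(\calJ_g(4,8))\ \subseteq\ \calS^{\eta_g}\ \subseteq\ \calS.$$
The first inclusion is precisely the Schottky--Jung statement that $SJ^{\eta_g}(R)\in\calI_g^J$ for every $R\in\calI_{g-1}^A$, so that each such form --- and hence the whole collection --- vanishes on $Th(\calJ_g(4,8))$. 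The second inclusion holds because, by lemma~\ref{lm:SR}, each $S_{jk}$ equals $SJ^{\eta_g}(R_{jk})$ with $R_{jk}\in\calI_{g-1}^A$ (proposition~\ref{prop:Rsum}); thus $\{S_{jk}\}$ is a subcollection of the defining forms of $\calS^{\eta_g}$, and imposing fewer equations can only enlarge the zero locus.

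To conclude, I would invoke the elementary fact that if $X\subseteq Y\subseteq Z$ are topological spaces and $X$ is an irreducible component of $Z$ --- that is, a maximal irreducible closed subset --- then $X$ is an irreducible component of $Y$ as well: any irreducible closed $W$ with $X\subseteq W\subseteq Y$ is an irreducible closed subset of $Z$ containing $X$, hence $W=X$. Applying this with $X=Th(\calJ_g(4,8))$, $Y=\calS^{\eta_g}$, $Z=\calS$, and using the Main Theorem, which asserts that $Th(\calJ_g(4,8))$ is an irreducible component of $\calS$, we obtain that $Th(\calJ_g(4,8))$ is an irreducible component of the big Schottky locus $\calS^{\eta_g}$, which is the assertion of the corollary. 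Moreover, the same squeezing argument, now inserting van Geemen's small Schottky locus (defined by the $SJ^\eta(R)$ for all $R\in\calI_{g-1}^A$ and all $\eta$) between $Th(\calJ_g(4,8))$ and $\calS^{\eta_g}$, simultaneously recovers the statement that the small Schottky locus is a weak solution to the Schottky problem.

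There is no genuine obstacle in this argument, as all the substance has been pushed into the Main Theorem and into lemma~\ref{lm:SR}. The only point requiring care is the bookkeeping of the level structure: one must make sure that the forms $S_{jk}$ literally occur among the defining equations $SJ^{\eta_g}(R)$ of the big Schottky locus for the \emph{same} choice of two-torsion point $\eta_g$. This is exactly why lemma~\ref{lm:SR} is phrased for $\eta_g$, and why the invariance of $\eta_g$ --- as opposed to the more commonly used $\eta_0$ --- under permutations of the columns of the characteristics is what makes the $S_{jk}$ fit together into the symmetric family used above.
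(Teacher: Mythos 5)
Your proof is correct and matches the paper's intended argument: the corollary is deduced from the Main Theorem together with Lemma~\ref{lm:SR} by exactly the squeeze $Th(\calJ_g(4,8))\subseteq\calS^{\eta_g}\subseteq\calS$ and the observation that an irreducible component of the larger zero locus remains one of any intermediate closed subset. Your additional remark that the same sandwich recovers the small-Schottky statement is likewise the point the authors make when they say the corollary ``implies the main result of van Geemen's paper.''
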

\noindent (Of course we have in fact proven that it is enough to take Riemann's quartic relations, as a subset of $\calI_{g-1}^A$, and among those, to only take those that imply all $R_{jk}$).

We prove the main theorem by expanding $S_{jk}$ near the locus of diagonal period matrices, and showing that the lowest degree terms of the expansions give a collection of what are called Poincar\'e relations. These are infinitesimal (i.e.~up to terms of higher order) relations satisfied by period matrices of Jacobians that are close to being diagonal. The only such relation in genus 4 was proven by Poincar\'e~\cite{poincare}. Poincar\'e states in~\cite[pp.~298--299]{poincare} that his relation generalizes to arbitrary genus. Our work gives  a complete proof of Poincar\'e relations for arbitrary Riemann surfaces of arbitrary genus. A different proof is given by Fay~\cite[Ex.~3.4, p.~45]{faytheta}. Rauch in~\cite[p.~228]{rafabook} asks whether in general one can find Schottky(-Jung) identities that imply Poincar\'e relations, and in particular our work answers this question in the affirmative. By analyzing these lowest order terms of $S_{jk}$, we then show that in a neighborhood of a generic diagonal period matrix these equations are functionally independent, and thus that the common zero locus of all $S_{jk}$ is $(3g-3)$-dimensional, which implies that it contains the Jacobian locus as an irreducible component.

\smallskip
The structure of the text is as follows. In Section 2 we fix the notation and review and extend the results of Fay~\cite{fayriemann} and the third author~\cite{smrelations} on the linear span of Riemann's quartic relations, and show that the identity $R_{34}$ among theta constants holds.
In Section 3 we recall the Schottky-Jung proportionalities, and give the explicit formula~\eqref{eq:SJetag} for them for the two-torsion point $\eta_g$. In Section 4 we briefly recall the well-known expansion of theta constants near the locus of diagonal period matrices.
In Section 5 we recall Poincar\'e's notion of ``infinitesimal" relations for periods of Riemann surfaces near diagonal matrices, and prove~\Cref{thm:Poincareholds}, proving the generalization of his relations to arbitrary genus. The proof is by looking at the lowest order terms of the expansions of our $S_{jk}$. Finally, in Section 6 we combine all of these ingredients to prove that $S_{jk}$ are locally functionally independent near $\calD_g$, which implies our Main Theorem.

\subsection*{Acknowledgements}
We are grateful to G.~Codogni and B.~van Geemen for interesting discussions and useful comments on the manuscript.

\section{Riemann's quartic relations and their linear combinations}
In this section we fix the notation for moduli spaces of curves, abelian varieties, and their covers. We then recall the theta constants, their properties, and relations among them, for the details on all of this we refer to~\cite{igusabook} and \cite{rafabook}.

We denote by $\calH_g:=\lbrace\tau\in\operatorname{Mat}_{g\times g}(\CC):\tau^t=\tau; \operatorname{Im} \tau>0\rbrace$ the Siegel upper half-space. The quotient $\calA_g:=\calH_g/\Sp(2g,\ZZ)$ is the moduli space of complex principally polarized abelian varieties (ppav).

For any even $\ell$, let $\Gamma_g(\ell)\subset\Sp(2g,\ZZ)$ be the normal subgroup that is the kernel of the map to $\Sp(2g,\ZZ/\ell\ZZ)$, and let $\Gamma_g(\ell,2\ell)$ be the subgroup of $\Gamma_g(\ell)$ consisting of matrices such that the diagonals of $A^t B$ and $C^t D$ (where $\gamma\in\Sp(2g,\ZZ)$ is written in block form $\gamma=\left(\begin{smallmatrix}A&B\\ C&D\end{smallmatrix}\right)$) are congruent to zero modulo $2\ell$. The level covers of moduli of ppav are then the quotients $\calA_g(\ell):=\calH_g/\Gamma_g(\ell)$ and $\calA_g(\ell,2\ell):=\calH_g/\Gamma_g(\ell,2\ell)$ by these level subgroups.

Given $\ep,\de\in\ZZ^g$, the theta function with characteristics $ \chars\ep\de$  is defined as
$$
 \tc\ep\de(\tau,z):=\sum_{n\in\ZZ^g}\exp\left(\pi i (n+\ep/2)^t\left(\tau (n+\ep/2)+z+\de\right)\right).
$$
The theta constant is the evaluation of theta function at $z=0$. We will be mostly concerned with theta constants, and will write simply $\tc\ep\de$ for such, if $\tau$ and $z=0$ is understood. We will always write the~$z$ variable when dealing with theta functions. The theta constants satisfy the identity $ \tc{\ep+2a}{\de +2b}(\tau,0)= (-1)^{\ep^t b} \tc\ep\de(\tau,0)$ for any $a,b\in\ZZ^g$, and thus (up to sign) it is often convenient to work with characteristics lying in $(\ZZ/2\ZZ)^{2g}$, but we will follow~\cite{faytheta} in working with characteristics in $\ZZ^{2g}$ which accounts for some sign differences between our formulas and those in the literature.
By  an  abuse of notation, we will write $\ep,\de$ as row vectors, but they will be treated as column vectors, as will be all vectors used in calculations. We call $\ep$ the top, and $\de$ the bottom characteristic, and say $(g)$-characteristic when we want to emphasize the dimension in which we are working. A characteristic is called even or odd depending on whether $e(\chars\ep\de):=\ep^t\cdot\de$ is even or odd, correspondingly. All theta constants with  odd characteristics vanish identically in $\tau$. \begin{ntn}
For convenience, we denote by $K_g=(\ZZ/2\ZZ)^{2g}$ the set of characteristics, denote by $K_g^+,K_g^-\subset K_g$ the sets of even and odd characteristics, respectively, and let $k_g^\pm:= 2^{g-1}(2^g\pm 1)$ be the cardinalities of the sets $K_g^\pm$.
\end{ntn}

Defining the action of $\Sp(2g,\ZZ)$ on characteristics via
$$
 \gamma\circ\chars\ep\de:=\left(\begin{smallmatrix} D& -C\\ -B& A\end{smallmatrix}\right)\chars\ep\de+\left[\begin{smallmatrix}\diag(CD^t)\\ \diag(AB^t)\end{smallmatrix}\right],
$$
theta constants satisfy the following transformation formula (see~\cite{igusabook}):
$$
\begin{aligned}
 \theta&\left[\gamma\circ\chars{\ep}{\de}\right](\gamma\circ\tau)=
 \kappa(\gamma)\sqrt{\det(C\tau+D)}\tc\ep\de(\tau)\\
  &\cdot \exp\left((-\pi i/4) \ep^t D^tB\ep - 2 \de C^t B\ep +\de^t C^t A\de -2(D\ep^t-C\de^t)\diag(AB^t)\right),
\end{aligned}
$$
where $\kappa$ is some eighth root of unity independent of the characteristic $\chars\ep\de$. It moreover turns out that $\kappa(\gamma)=1$ for any $\gamma\in\Gamma_g(4,8)$, and thus each theta constant is a modular form with respect to $\Gamma_g(4,8)$, which is to say that
$$
 \tc\ep\de(\gamma\circ\tau)=\sqrt{\det(C\tau+D)}\tc\ep\de(\tau)
$$
for any $\gamma\in\Gamma_g(4,8)$ (where the square root can in fact be chosen globally). The map sending a ppav to the set of all even theta constants then defines an embedding
$$
 Th:\calA_g(4,8)\hookrightarrow\PP^{2^{g-1}(2^g+1)-1}.
$$
The classical, and still unsolved, question of determining all relations among theta constants, is the question of determining the defining ideal $\calI_g^A$ of $Th(\calA_g(4,8))\subset \PP^{2^{g-1}(2^g+1)-1}$. The only known relations  among theta constants  are Riemann's quartic relations.  To write them, recall the Weil pairing of two characteristics $m=\chars{\ep_1}{\de_1}$ and $n=\chars{\ep_2}{\de_2}$ defined by
$$e(m,n):= (-1)^{ \ep_1^t\cdot\de_2 -  \ep_2^t\cdot\de_1}.$$

We recall from~\cite{fayriemann} the following special case of (generalized) Riemann's theta formula. Let $A\subset K_g$ be a  subgroup of order $2^{g-m}$  and let $B$ be the orthogonal group with respect to the symplectic form, i.e. $B:= \{m\in  K_g : e(m,n)=1 \,{\rm for\, all}\, n\in A\}$. Then the following identity holds:
\begin{equation}\label{grt}
\begin{aligned}
2^m&\sum_{\chars{\ep }{\de }\in A} e(\chars{\ep }{\de }) (-1)^{\de^t (\al+\sigma)} \tc\ep\de\tc{\ep+\al}{\de+\be}\tc{\ep+\sigma}{\de+\mu}\tc{\ep+\al+\sigma}{\de+\be+\mu}\\
&=\sum_{\chars{\ep }{\de }\in B} e(\chars{\ep }{\de }) (-1)^{\de^t (\al+\sigma)} \tc\ep\de\tc{\ep+\al}{\de+\be}\tc{\ep+\sigma}{\de+\mu}\tc{\ep+\al+\sigma}{\de+\be+\mu}.
\end{aligned}
\end{equation}

\begin{proof}[Proof of~\Cref{prop:Rsum}]
To prove that $R_{34}$ lies in $\calI_g^A$, we simply apply~\eqref{grt} for
$$A:=\{ \chars{0&0&0&\ep}{0&0&0&\bf{0} }\,  {\rm for\ all }\, \ep \in (\ZZ/2\ZZ)^{g-3}\},$$
$$\chars{\al}{\be}=\chars{0&1&1&0\ldots&0}{1&0&0&0\ldots&0}, \quad
\chars{\sigma}{\mu}=\chars{1&0&0&0\ldots&0}{0&0&1&0\ldots&0}.$$

Let $V_0$  be the  two-dimensional subspace  of $K_g$ spanned by $\chars{\al}{\be}, \chars{\sigma}{\mu}$, and set $V_1:=V_0+\chars{0&1&0&0\ldots&0}{0&0&0&0\ldots&0}$ , $V_2:=V_0+\chars{0&0&0&0\ldots&0}{0&1&1&0\ldots&0}$  and $V=V_0\cup V_1\cup V_2$.  Unless all theta constants appearing in the monomial in the sum in the right-hand-side of~\eqref{grt} are even, such a summand vanishes. Thus the only non-vanishing summands are those when $\chars{\ep}{\de}\in A+V$, i.e.~can be written as the sum of a characteristic lying in~$A$ and a characteristic lying in~$V$. 
Moreover, the summands in the right-hand-side are the same for all $\chars{\ep}{\de}\in  V_i +m$, for $m\in A$ fixed.

This gives, up to a factor $4$, precisely the expression for $R_{34}$ given by formula~\eqref{eq:Rgeng}, and thus $R_{34}$ lies in $\calI_g^A$. 
Furthermore, each $R_{jk}$ is obtained from $R_{34}$ by permuting some columns of the characteristics. This permutation can of course be obtained by the action of some element $\gamma$ of the  symplectic group,
 and since $\calI_g^A$ is invariant under $\Sp(2g,\ZZ)$, it follows that also $R_{jk}=\gamma\circ R_{34}\in\calI_g^A$.
\end{proof}
As an example (and to highlight how sign conventions differ depending on whether one writes characteristics as elements of $\ZZ$ or of $\ZZ/2\ZZ$), consider the case of genus $g=3$. The equation above then reads
 $$
 \begin{aligned}
  8\tc{0&0&0}{0&0&0}\tc{0&1&1}{1&0&0}\tc{1&0&0}{0&0&1}\tc{1&1&1}{1&0&1}=& 4\tc{0&0&0}{0&0&0}\tc{0&1&1}{1&0&0}\tc{1&0&0}{0&0&1}\tc{1&1&1}{1&0&1}\\
 4\tc{0&1&0}{0&0&0}\tc{0&2&1}{1&0&0}\tc{1&1&0}{0&0&1}\tc{1&2&1}{1&0&1}
 &+4\tc{0&0&0}{0&1&1}\tc{0&1&1}{1&1&1}\tc{1&0&0}{0&1&2}\tc{1&1&1}{1&1&2}\end{aligned}
 $$
 Converting to characteristics in $\ZZ/2\ZZ$, this gives the classical form of Riemann's quartic addition theorem, see~\cite{vgsurvey}:
 \begin{equation}\label{eq:Rgen3}
  \begin{aligned}
 \tc{0&0&0}{0&0&0}\tc{0&1&1}{1&0&0}\tc{1&0&0}{0&0&1}\tc{1&1&1}{1&0&1}&=  \\
  \tc{0&1&0}{0&0&0}\tc{0&0&1}{1&0&0}\tc{1&1&0}{0&0&1}\tc{1&0&1}{1&0&1}
 &-\tc{0&0&0}{0&1&1}\tc{0&1&1}{1&1&1}\tc{1&0&0}{0&1&0}\tc{1&1&1}{1&1&0}
 \end{aligned}
 \end{equation}
\begin{rem} \label{rem:doubling}
This genus 3 case of Riemann's quartic addition theorem can be considered as an alternative starting point for our constructions, using only Riemann's quartic addition theorem.

Consider the square matrix  $M(g)$ of  size $2^{2g}$  whose entries are the Weil pairings of all pairs of characteristics $m, n \in K_g$, which we write as
$$M(g):=\left(\begin{smallmatrix} M^+(g)&N(g)\\ N(g)^t &M^-(g)\end{smallmatrix}\right), $$
where the set of characteristics is ordered in such a way that $k_g^+$ even characteristics in the set $K_g^+$ appear before the  $k_g^-$ odd ones.  Recall that by~\cite{smrelations} all quartic identities in theta constants are  related to  the eigenvectors of eigenvalue $-2^{g-1}$ of the  matrix $M^+(g)$.

It turns out that there is a simple procedure, which we call the {\em doubling principle} that constructs these eigenvectors recursively from the eigenvectors of eigenvalue $-2^{g-2}$ of the  matrix $M^+(g-1)$. In fact, this $-2^{g-1}$-eigenspace of  $M^+(g)$ is equal to the direct sum of vector spaces $U_1\oplus U_2\oplus U_3\oplus U_4$, where
\begin{itemize}
\item $U_1$ and $U_2$ are spanned by the vectors of the form
$$
  u_1:=(X,X,0,0)^t ,\quad \mbox {and}\quad u_2:=(X,0,X,0)^t,
$$
respectively, for $X$ being any $-2^{g-2}$-eigenvector of $M^+(g-1)$;
\item $U_3$ is   spanned by the vectors of the form $u_3:=(X,0,0,Y)^t$,
for  $\left(\begin{smallmatrix}X\\ Y\end{smallmatrix}\right)$ being any $-2^{g-1}$-eigenvector of $M(g-1)$; and
\item $U_4$ is   spanned by the vectors of the form $u_4:=(X,-X,-X,0)^t$, for $X$ being any $2^{g-1}$-eigenvector of $M^+ (g-1)$.
\end{itemize}
It turns out then that the quartic relation $R_{34}$ in any genus can be obtained by repeatedly applying this doubling principle starting from to \eqref{Rgen3} in genus 3, which gives an alternative proof of~\Cref{prop:Rsum}.
\end{rem}

\section{The Schottky-Jung proportionalities}
We denote by $\calM_g(4,8)$ the fiber product of $\calM_g$ and $\calA_g(4,8)$ over $\calA_g$ under the Torelli map and the forgetful morphism. By abuse of notation, we denote $J:\calM_g(4,8)\to\calA_g(4,8)$ also the lift of the Torelli map.

The Schottky-Jung proportionalities relate the theta constants of the Jacobian and of the Prym. They were discovered in~\cite{scju}, rigorously proven to hold in~\cite{fasc,farascju}, and recast algebraically by Mumford in~\cite{mumfordprym}. See the surveys ~\cite{dosurvey},\cite{beprymsurvey}, for the details of the Schottky-Jung approach, and~\cite{faprymsurvey} for a survey on Pryms. The Schottky-Jung proportionalities for arbitrary two-torsion point $\eta$, as a relation between modular forms, is described explicitly in

Denote by $\calR_g$ the moduli space of pairs $(C,\eta)$, where $C\in\calM_g$, and $\eta\in J(C)[2]\setminus\{0\}$ is a non-zero 2-torsion point on the Jacobian. Such a point $\eta$ defines an unramified connected double cover $\tilde C\to C$, and the Prym $Pr(C,\eta)$ is then defined to be the connected component of the kernel of the map $J(\tilde C)\to J(C)$. The Prym turns out to have a natural principal polarization, so that the construction defines a morphism $Pr:\calR_g\to\calA_{g-1}$. For the two-torsion point
\begin{equation}\label{eq:eta0}
\eta_0:=\chars{0&0&\ldots& 0}{1&0&\ldots&0},
\end{equation}
the {\em Schottky-Jung proportionalities} are the equalities
\begin{equation}\label{eq:SJ}
  \theta^2\chars\ep\de(Pr(C,\eta_0))=c\,\tc{0&\ep}{0&\de}(J(C))\tc{0&\ep}{1&\de}(J(C)),
\end{equation}
which hold for some non-zero constant $c$ independent of $\chars\ep\de$.

For our purposes, the combinatorics is such that we will need the explicit form of the Schottky-Jung proportionalities for the two-torsion point
\begin{equation}\label{eq:etag}
 \eta_g:=\chars{0&0&\ldots& 0}{1&1&\ldots&1}.
\end{equation}
In~\cite{farkashscju} the case of Schottky-Jung proportionalities for $\eta'=\chars{0&0&0&\ldots&0}{1&1&0&\ldots&0}$ was studied explicitly, and
and it turns out that additional signs depending on $\chars\ep\de$ appear. We will not be able to track the signs anyway, and only the characteristics appearing in the proportionality will play a role. The Schottky-Jung proportionalities for arbitrary $\eta$, as relations between modular forms, are described explicitly by van Geemen in~\cite{vgsurvey}, as we now quickly recall.

Since $\Sp(2g,\ZZ/2\ZZ)$ acts transitively on $J(C)[2]\setminus\lbrace 0\rbrace$, acting on the Schottky-Jung proportionalities~\eqref{eq:SJ} for $\eta_0$ by an element of the symplectic group that sends $\eta_0$ to $\eta$, one obtains the general case of the Schottky-Jung proportionalities. For a given $\eta$ one has an isomorphism $j:Pr(C,\eta)[2]\to V/\eta$, where $Pr(C,\eta)[2]$ is the set of two-torsion points of the Prym, and $V=\eta^\perp\subset K_g$ is the set of all $v$ such that the symplectic pairing $e(\eta,v)=0$. Then for any $\chars\ep\de\in Pr(C,\eta)[2]$, the characteristics appearing in the right-hand-side of Schottky-Jung proportionalities for~$\eta$ would be $j\left(\chars\ep\de\right)$ and $\eta+j\left(\chars\ep\de\right)$.

Note that the proportionalities depend on the choice of $j$; as this was never explained in the literature, we make this precise. A choice of a basis of $J(\tilde C)[2]$ gives the lifting (denoted $\pi^*$ by Mumford) $Pr(C,\eta)[2]\to J(\tilde C)[2]$, and $j$ is then obtained by composing this with projection to $J(C)[2]$. Thus  any choice of $j$ can arise; the standard choice for $\eta_0$ is to embed $(\ZZ/2\ZZ)^{2g-2}$ into $K_g$ as the characteristics with first column equal to $\chars00$.

For our choice of $\eta_g$, we will choose $j$ to be
$$
 j\left(\chars{\ep_1&\ldots&\ep_{g-1}}{\de_1&\ldots&\de_{g-1}}\right):=\chars{\ep_1+\ldots+\ep_{g-1}&\ep_1&\ldots&\ep_{g-1}}{0&\de_1&\ldots&\de_{g-1}}.
$$
Then the Schottky-Jung proportionalities take the explicit form
\begin{equation}\label{eq:SJetag}
 \!\! \theta^2\chars\ep\de(Pr(C,\eta_g))=\pm c\,
  \tc{\sum\ep_i&\ep_1&\ldots&\ep_{g-1}}{0&\de_1&\ldots&\de_{g-1}}(J(C))\,\cdot\,
  \tc{\sum\ep_i&\ep_1&\ldots&\ep_{g-1}}{1&1+\de_1&\ldots&1+\de_{g-1}}(J(C)),
\end{equation}
which we will use from now on --- where the sign depends on $\chars\ep\de$.

\medskip
The classical approach to the Schottky problem is as follows. Given any equation $P\in\calI_{g-1}^A$, note that $P$ is satisfied by the theta constants of $P(C,\eta)$ for any $(C,\eta)\in\calR_g$. Replacing in the polynomial $P$ each theta constant of $Pr(C,\eta)$ by the square root of the product of the two corresponding theta constants of $J(C)$ given by the Schottky-Jung proportionalities gives a polynomial in the {\em square roots} of genus $g$ theta constants of $J(C)$. Since the square roots cannot be chosen globally, to make this a well-defined relation we multiply the product of the results of such substitutions for {\em all possible choices} of the values of square roots of the monomials involved --- except choosing one square root of one monomial to be given (or otherwise one obtains each factor twice, once with plus and once with minus sign). We denote this product by $SJ^{\eta}(P)$, and call it the {\em Schottky-Jung identity} corresponding to $P$ and $\eta$. It is then a polynomial in genus $g$ theta constants, of degree equal to the degree of $P$, times $2$ raised to the power equal to the number of monomials in $P$ minus one,  and Schottky-Jung proportionalities imply that $SJ^{\eta}(P)\in\calI_g^J$.

The simplest non-trivial case of this is for $g=4$: the genus 3 Riemann's quartic relation involves three monomials and has the form $r_1-r_2+r_3=0$, where each $r_i$ is a product of four genus 3 theta constants. 
Applying the Schottky-Jung proportionalities
gives $\sqrt{R_1}-\sqrt{R_2}-\sqrt{R_3}=0$, where each $R_i$ is a product of eight theta constants evaluated at $ \tau \in J(C)$. To obtain a polynomial in theta constants, one needs to take the product
$(\sqrt{R_1}+\sqrt{R_2}+\sqrt{R_3})(\sqrt{R_1}-\sqrt{R_2}+\sqrt{R_3})
(\sqrt{R_1}+\sqrt{R_2}-\sqrt{R_3})(\sqrt{R_1}-\sqrt{R_2}-\sqrt{R_3})$, which is equal to
\begin{equation}\label{eq:Rconjugates}
SJ^\eta(r_1-r_2+r_3)=R_1^2+R_2^3+R_3^2-2R_1R_2-2R_1R_3-2R_2R_3=0,
\end{equation}
which is a degree 16 {\em polynomial} in theta constants of the genus 4 Jacobian $J(C)$. This equation was discovered by Schottky, and was rigorously proven by Igusa~\cite{igusagen4}  and Freitag~\cite{freitaggen4} to generate $\calI_4^J$, i.e.~to be the unique defining equation for $Th(\calJ_4)\subset Th(\calA_4)$.

In our case, applying Schottky-Jung proportionalities for $\eta_g$ to $R_{jk}$ gives
\begin{lm}\label{lm:SR}
For any $3\le j<k\le g$ the identity $S_{jk}=SJ^{\eta_g}(R_{jk})$ holds.
\end{lm}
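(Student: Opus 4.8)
The plan is to prove the lemma as a formal identity between two explicit polynomials in the genus-$g$ theta constants, by unwinding both constructions and matching them monomial by monomial; no vanishing statement enters. Recall that $SJ^{\eta_g}(R_{34})$ is built as follows: one regards $R_{34}$ as the genus-$(g-1)$ quartic relation of Proposition~\ref{prop:Rsum} (so that $\ep$ runs over $(\ZZ/2\ZZ)^{g-4}$ and $R_{34}\in\calI_{g-1}^A$), substitutes into each of its $3\cdot 2^{g-4}$ monomials, in place of every genus-$(g-1)$ theta constant $\tc\ep\de$, a square root of the product of the two genus-$g$ theta constants prescribed by the Schottky--Jung formula~\eqref{eq:SJetag}, and finally takes the product of the resulting expressions (each a sum of signed square roots) over all choices of these square roots, with one square root held fixed. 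Both $SJ^{\eta_g}(R_{34})$ and $S_{34}$ are therefore of the same shape: a product over all sign vectors with one entry fixed, of an expression $\sum_{\ep}\big(a_\ep\sqrt{M^{(1)}_\ep}+b_\ep\sqrt{M^{(2)}_\ep}+c_\ep\sqrt{M^{(3)}_\ep}\big)$, each $M^{(i)}_\ep$ being a degree-eight monomial in theta constants; in particular the degrees match, $4\cdot 2^{3\cdot 2^{g-4}-1}=2^{3\cdot 2^{g-4}+1}$.

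The core of the proof is to check that the eight theta constants multiplied under each radical of $S_{34}$ are exactly the ones produced by feeding the four theta constants of the corresponding monomial of~\eqref{eq:Rgeng} into~\eqref{eq:SJetag}. Fix $\ep\in(\ZZ/2\ZZ)^{g-4}$ and put $E=\ep_1+\dots+\ep_{g-4}$. For a genus-$(g-1)$ characteristic whose top row is $(\nu_1,\nu_2,\nu_3,\ep)$, the entry $\sum\ep_i$ occurring in~\eqref{eq:SJetag} is $\nu_1+\nu_2+\nu_3+E$, hence equal to $E$ or $1+E$ according to the parity of $\nu_1+\nu_2+\nu_3$; meanwhile the bottom row $\de$ goes to the two rows $(0,\de)$ and $(1,{\bf 1}+\de)$. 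Applying this to the four factors $\tc{0&0&0&\ep}{0&0&0&{\bf 0}}$, $\tc{0&1&1&\ep}{1&0&0&{\bf 0}}$, $\tc{1&0&0&\ep}{0&0&1&{\bf 0}}$, $\tc{1&1&1&\ep}{1&0&1&{\bf 0}}$ of the $\ep$-summand of the first monomial of~\eqref{eq:Rgeng} reproduces precisely the eight theta constants inside the first radical of the $\ep$-summand of $S_{34}$, and similarly the second and third monomials of~\eqref{eq:Rgeng} yield the second and third radicals. This amounts to a finite, if lengthy, bookkeeping of characteristics, which I would record in a single table listing the three substitutions.

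It then remains to reconcile the signs and the ``one fixed square root'' conventions used in the two definitions. The substitution~\eqref{eq:SJetag} introduces, for each monomial, an $\ep$-dependent sign multiplying the corresponding radical, and~\eqref{eq:Rgeng} contributes the signs $+,-,+$; thus $SJ^{\eta_g}(R_{34})$ is obtained from $S_{34}$ by prescribed constant sign changes of the $n:=3\cdot 2^{g-4}$ inner signs and, possibly, by fixing a different one of the radicals. But for $n\ge 3$ the product $\prod\big(\sum_{\ep,i}\pm\sqrt{M^{(i)}_\ep}\big)$, taken over all sign vectors with one entry fixed, is independent both of which entry is fixed and of its fixed value --- this is the same symmetric-function computation that gives~\eqref{eq:Rconjugates} in genus $4$ --- and absorbing the fixed signs amounts exactly to such a change of fixed value. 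Hence $S_{34}=SJ^{\eta_g}(R_{34})$.

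Finally, for arbitrary $3\le j<k\le g$: the two-torsion point $\eta_g$ is invariant under every permutation of columns, and~\eqref{eq:SJetag} carries column $m$ of a genus-$(g-1)$ characteristic to column $m+1$ of the genus-$g$ output. Hence swapping columns $2\leftrightarrow j-1$ and $3\leftrightarrow k-1$ in $R_{34}$ --- which by Proposition~\ref{prop:Rsum} is exactly the definition of $R_{jk}$ --- corresponds, under $SJ^{\eta_g}$, to swapping columns $3\leftrightarrow j$ and $4\leftrightarrow k$ in $S_{34}$, which by the Main Theorem is $S_{jk}$; so the general identity follows from the case $(j,k)=(3,4)$. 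I expect the only real obstacle to be organizational: keeping the eight characteristics under each radical straight throughout the monomial-by-monomial verification, and stating the handling of the fixed signs and of the normalization cleanly enough to be fully convincing.
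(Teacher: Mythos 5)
Your proposal is correct and follows essentially the same route as the paper's (two-sentence) proof: verify that feeding each monomial of $R_{34}$ through~\eqref{eq:SJetag} reproduces the eight theta constants under the corresponding radical of $S_{34}$, take the product over square-root choices, and use the column-permutation symmetry of $\eta_g$ for general $(j,k)$. Your extra care with the $\pm$ signs and the ``one fixed square root'' normalization (via the observation that the half-product is independent of which sign is fixed for $n\ge 3$) is a welcome elaboration of a point the paper leaves implicit, but it is not a different argument.
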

\begin{proof}
Indeed, applying the Schottky-Jung proportionalities~\eqref{eq:SJetag} to $R_{34}$, one obtains an expression in square roots of theta constants that is one of the factors in $S_{34}$, and then taking the product over all possible choices of square roots gives exactly the product given by $S_{34}$, so that $S_{34}=SJ^{\eta_g}(R_{34})$. Since the columns number $3,\ldots,g$ of $\eta_g$ are all equal, permuting the columns $3$ and $j$, and columns $4$ and $k$ gives the desired equality for all $j,k$.
\end{proof}

\section{Expansion of theta constants near the diagonal}
The locus $\calJ_g$ contains the locus of ppav that are products of $g$ elliptic curves. Explicitly, we think of this locus as the image in $\calA_g$ of the locus $\calD_g\subset\calH_g$ consisting of diagonal period matrices $\calD_g:=\lbrace \tau=\diag(t_{1},t_{2},\ldots,t_{g})\rbrace$.
The proof of the main theorem will consist of showing that the $S_{jk}$ are locally functionally independent near $\calD_g$, by computing the lowest order term of their expansions.  We thus recall the expansion of theta constants near the diagonal, which was used  by Rauch~\cite{rauchschpo} in his genus 4 computations.

First recall that the theta constant of a diagonal period matrix decomposes as a product:
$$
 \tc\ep\de(\diag(t_{1},t_{2},\ldots,t_{g}))=\tc{\ep_1}{\de_1}(t_1)\cdot\ldots\cdot\tc{\ep_g}{\de_g}(t_g).
$$
Recall further for any $j<k$ the heat equation
$$
 \frac{\partial\tc\ep\de}{\partial\tau_{jk}}=\frac{1}{2\pi i}\frac{\partial^2\tc\ep\de}{\partial z_jz_k}.
$$
We then evaluate for any $j<k$ the partial derivative
$$\begin{aligned}
 \frac{\partial\tc\ep\de}{\partial\tau_{jk}}|_{\tau=\diag(t_{1},t_{2},\ldots,t_{g})}&=\frac{1}{2\pi i}
 \tc{\ep_1}{\de_1}(t_1)\cdot\ldots\cdot\tc{\ep_{j-1}}{\de_{j-1}}(t_{j-1})\\
 &\!\!\!\!\!\!\cdot\frac{\partial\tc{\ep_j}{\de_j}(t_j,z)}{\partial z}|_{z=0}\cdot\tc{\ep_{j+1}}{\de_{j+1}}(t_{j+1})\cdot\ldots\cdot\tc{\ep_{k-1}}{\de_{k-1}}(t_{k-1})\\
 &\!\!\!\!\!\!\cdot\frac{\partial\tc{\ep_k}{\de_k}(t_k,z)} {\partial z}|_{z=0}\cdot  \tc{\ep_{k+1}}{\de_{k+1}}(t_{k+1})\cdot\ldots\cdot\tc{\ep_g}{\de_g}(t_g).
\end{aligned}
$$
Since all genus one theta functions are even except the one with characteristic $\chars11$,  this partial derivative vanishes unless $\chars{\ep_j}{\de_j}=\chars{\ep_k}{\de_k}=\chars11$.

Thus expanding theta constants in Taylor series in $\tau_{jk}$ near $\calD_g$, {\em for $t_1,\ldots,t_g$ fixed}, the constant and linear terms are
\begin{equation}\label{eq:expand}\begin{aligned}
 &\tc\ep\de(\tau)=\tc{\ep_1}{\de_1}(t_1)\cdot\ldots\cdot\tc{\ep_g}{\de_g}(t_g)\\ &+\frac{1}{2\pi i}\!\!\! \sum_{j<k,\chars{\ep_j}{\de_j}=\chars{\ep_k}{\de_k}=\chars11}\!\!\!\!\!\!\!\!\tau_{jk}\cdot
 \frac{\partial\tc11(t_j,z)} {\partial z}|_{z=0}\cdot
 \frac{\partial\tc11(t_k,z)} {\partial z}|_{z=0}\cdot\!\!
 \prod_{m\ne j,k}\!\! \tc{\ep_m}{\de_m}(t_m),
 \end{aligned}
\end{equation}
and the full Taylor series expansion includes further monomials in $\tau_{jk}$ that are of total degree 2 or higher. Note that if no column $\chars{\ep_m}{\de_m}$ is equal to $\chars11$, then the Taylor series has a non-zero constant term, and zero linear term. If precisely two different columns $\chars{\ep_j}{\de_j}$ and $\chars{\ep_k}{\de_k}$ are equal to $\chars11$, then the Taylor series has zero constant term, and the linear term is a multiple of $\tau_{jk}$. Finally, if more than two columns are equal to $\chars11$, then both the constant and linear terms of the Taylor series are zero, and in fact the lowest order term has degree equal to half the number of columns equal to $\chars11$.

Furthermore, recalling Jacobi's triple product identity in genus 1:
\begin{equation}\label{eq:tripleproduct}
 \frac{\partial\tc11(t,z)}{\partial z}|_{z=0}=-\pi\tc00(t)\tc01(t)\tc10(t),
\end{equation}
the linear term can be written in terms of theta constants of $t_j$ and $t_k$.

\section{Poincar\'e relations}
The Poincar\'e relation in genus 4 was discovered by Poincar\'e~\cite{poincare}; it is an infinitesimal relation for period matrices of Jacobians near $\calD_g$ --- Poincar\'e calls it ``approximate identity''. Rather than thinking of it as being the lowest order terms of a suitable power series expansion, Poincar\'e derived it from something he called a ``translation surface''. Poincar\'e then stated that his proof in genus 4 could be easily generalized to higher genus, but gave no details. Garabedian~\cite{garabedian} proved the Poincar\'e relations for some special Riemann surfaces, in arbitrary genus.

Rauch in~\cite{rauchschpo} reproved the original Poincar\'e relation in genus 4 by expanding a suitable genus $4$ Schottky-Jung identity near $\calD_4$, and in~\cite[Appendix 2]{rafabook} asked whether Poincar\'e relations for any genus could be obtained in a similar way. While Fay~\cite{faytheta} proved the Poincar\'e relation in any genus, we give a direct new proof in the spirit of Rauch, obtaining it by expanding $S_{jk}$ (see~\Cref{rem:noRiemann} for a discussion of why using Riemann's quartic relations instead of $R_{jk}$ would not work).

The Poincar\'e relations are the following equations for the off-diagonal elements of the period matrix, for all $i<j<k<l$:
\begin{equation}\label{eq:Poin}
  (\tau_{ij}\tau_{jk}\tau_{kl}\tau_{li})^{1/2}\pm(\tau_{ik}\tau_{kl}\tau_{lj}\tau_{ji})^{1/2}\pm(\tau_{il}\tau_{lj}\tau_{jk}\tau_{ki})^{1/2}
=O(\ep^3).
\end{equation}
 Note that the Poincar\'e relations do not depend on the diagonal entries $t_m$ of the period matrix, which is a priori surprising.

Similar to the case of Schottky-Jung proportionalities, the signs of the square roots in the Poincar\'e relation may not be chosen globally, and to get a well-defined polynomial equation in the entries of the period matrix, one multiplies the relations~\eqref{eq:Poin} for all four possible choices of square roots --- this was explained by Igusa~\cite[p.167]{iguproblems}. As in the derivation of~\eqref{eq:Rconjugates}, if we denote the three terms in the Poincar\'e relation by $\sqrt{P_1},\sqrt{P_2},\sqrt{P_3}$, the resulting equation that is polynomial in the entries of the period matrix has the form
\begin{equation}\label{eq:Pconjugates}
  P_1^2+P_2^2+P_3^2-2P_1P_2-2P_1P_3-2P_2P_3=O(\ep^{9}).
\end{equation}

We prove Poincar\'e relations by expanding the factors of $S_{jk}$ near $\calD_g$.
\begin{thm}\label{thm:Poincareholds}
Let $C$ be any curve in $\calM_g$ sufficiently close to a union of $g$ elliptic curves. Then after an appropriate choice of $A$ and $B$ cycles, the period matrix $\tau$ of $J(C)$ satisfies all Poincar\'e relations~\eqref{eq:Poin}.
\end{thm}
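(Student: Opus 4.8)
The plan is to deduce the Poincar\'e relations from the fact that the modular forms $S_{jk}$ vanish on the Jacobian locus. By lemma~\ref{lm:SR} and proposition~\ref{prop:Rsum} we have $S_{jk}=SJ^{\eta_g}(R_{jk})\in\calI_g^J$, so $S_{jk}(\tau)=0$ for every period matrix $\tau$ of a Jacobian. Following Rauch's approach in genus~$4$, I would expand each $S_{jk}$ as a power series in the off-diagonal entries $\tau_{ab}$, $a<b$, around $\calD_g$, with coefficients that are functions of the diagonal entries $t_1,\dots,t_g$, compute the lowest-degree homogeneous part of this expansion, and show that it is a product of genus~$1$ theta constants of the $t_m$ that does not vanish, times a power of the polynomial $\Pi:=P_1^2+P_2^2+P_3^2-2P_1P_2-2P_1P_3-2P_2P_3$ on the left-hand side of~\eqref{eq:Pconjugates}. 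Comparing orders of vanishing then forces the Poincar\'e relation.

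Concretely, each factor of $S_{34}$ has the form $\sum_{\ep}\bigl(a_\ep\sqrt{A_\ep}+b_\ep\sqrt{B_\ep}+c_\ep\sqrt{C_\ep}\bigr)$, where $A_\ep,B_\ep,C_\ep$ are the three products of eight genus~$g$ theta constants displayed in the statement. By the discussion of section~4, a theta constant occurring here vanishes on $\calD_g$ exactly when some column of its characteristic equals $\chars11$, to order half the number of such columns; since passing from $\ep=0$ to $\ep\ne 0$ only creates extra columns equal to $\chars11$, the summands with $\ep\ne 0$ vanish to strictly higher order, so only the term $\ep=0$ (hence $E=0$) contributes to the lowest-degree part. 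Inspecting the displayed characteristics with $\ep=0$ shows that in each of $A_0,B_0,C_0$ precisely four of the eight theta constants are non-zero on $\calD_g$ while the other four each have exactly two columns equal to $\chars11$, and that these four pairs of columns are the four edges of a $4$-cycle on $\{1,2,3,4\}$; the three products $A_0,B_0,C_0$ realise the three distinct such $4$-cycles, i.e.~the three monomials $P_1,P_2,P_3$ of~\eqref{eq:Poin} associated to $\{1,2,3,4\}$. Using~\eqref{eq:expand} and Jacobi's triple product identity~\eqref{eq:tripleproduct}, the lowest-order term in the $\tau_{ab}$ of $A_0,B_0,C_0$ is therefore $\operatorname{const}\cdot c_A(t)P_2$, $\operatorname{const}\cdot c_B(t)P_3$, $\operatorname{const}\cdot c_C(t)P_1$, where $c_A(t),c_B(t),c_C(t)$ are the products, with multiplicity, of the genus~$1$ theta constants $\tc00(t_m),\tc01(t_m),\tc10(t_m)$ contributed by the four non-vanishing factors and by the triple product identity applied to the four vanishing ones.

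The decisive point is that $c_A(t)=c_B(t)=c_C(t)=:c(t)$; this is why the linear combination $R_{34}$ of~\eqref{eq:Rgeng}, rather than Riemann's quartic relations themselves, must be fed into the Schottky-Jung map (see remark~\ref{rem:noRiemann}), and why $\eta_g$ is used in place of $\eta_0$. I would check it by verifying that for each $m$ the multiplicities with which $\tc00(t_m),\tc01(t_m),\tc10(t_m)$ occur coincide in the three products (they are $(4,4,4)$ for $m\le 4$ and $(4,4,0)$ for $m\ge 5$), and that the accompanying signs agree. Granting this, multiplying the lowest-order terms over all $2^{3\cdot 2^{g-4}-1}$ factors of $S_{34}$ --- of which $N:=2^{3\cdot 2^{g-4}-3}$ carry each of the four leading terms $\sqrt{A_0}\pm\sqrt{B_0}\pm\sqrt{C_0}$ --- shows that the lowest-degree part of $S_{34}$ equals $\operatorname{const}\cdot c(t)^{2N}\Pi^{N}$, homogeneous of degree $8N$ in the $\tau_{ab}$. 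Now let $C$ degenerate to a union of $g$ elliptic curves; in an adapted symplectic basis the $\tau_{ab}$ become holomorphic functions vanishing at the central point, and since $S_{34}(\tau(C))\equiv 0$ and $c(t)\ne 0$, the terms of degree $>8N$ --- each vanishing to order $\ge 8N+1$ --- force $\Pi^{N}$, hence by integrality of the order of vanishing $\Pi$ itself, to vanish to order $\ge 9$. This is the Poincar\'e relation~\eqref{eq:Pconjugates}, equivalently~\eqref{eq:Poin}, for $\{1,2,3,4\}$: $\Pi$ factors as $\prod(\sqrt{P_1}\pm\sqrt{P_2}\pm\sqrt{P_3})$, and as three of the four factors vanish to order exactly $2$ the fourth must vanish to order $\ge 3$. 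Running the argument with $S_{jk}$ gives the relation for $\{1,2,j,k\}$, and permuting the $g$ elliptic factors --- a symplectic change of the $A$- and $B$-cycles, under which $\calI_g^J$ is invariant --- yields the relations for every four-element subset, proving the theorem.

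The main obstacle is the combinatorial bookkeeping behind the last two paragraphs: determining, for each of $A_0,B_0,C_0$, which four of the eight theta constants vanish on $\calD_g$ and which of the three $4$-cycles the remaining linear factors form, and --- the crux --- checking that the genus~$1$ coefficients $c_A(t),c_B(t),c_C(t)$, including their signs, truly coincide. This coincidence fails when one expands $SJ^{\eta_g}$ of a single Riemann quartic relation, and it is exactly what makes the particular shape of $R_{34}$ (produced from~\eqref{eq:Rgen3} by the doubling proposition~\ref{prop:doubling}) and the permutation-invariant choice $\eta_g$ essential. One must also rule out a cancellation in the degree-$8N$ part when multiplying the factors, which holds because $\sqrt{P_1},\sqrt{P_2},\sqrt{P_3}$ are linearly independent and $\Pi$ is not identically zero.
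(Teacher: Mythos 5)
Your proposal is correct and follows essentially the same route as the paper: expand $S_{jk}$ near $\calD_g$, observe that only the $\ep={\bf 0}$ summands contribute to lowest order, identify the leading homogeneous part of each factor with $c(t)\bigl(\sqrt{P_1}\pm\sqrt{P_2}\pm\sqrt{P_3}\bigr)$ for a common non-vanishing genus-one theta factor $c(t)$, and conclude that the identical vanishing of $S_{jk}$ on the Jacobian locus forces the symmetrized Poincar\'e polynomial to vanish to higher order. The combinatorial checks you flag (four even-column versus four twice-$\chars11$ theta constants per monomial, the three $4$-cycles, equality of the coefficients $c_A=c_B=c_C$, and non-cancellation of the leading product) are exactly the verifications carried out in the paper's proof.
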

In genus 4 there is a unique Poincar\'e relation, for the quadruple $(ijkl)=(1234)$, and we first present a streamlined version of Rauch's computation in~\cite{rauchschpo} deriving it from a Schottky-Jung identity.
\begin{proof}[Proof of~\Cref{thm:Poincareholds} in genus 4]
Applying the Schottky-Jung proportionalities for $\eta_4$, given by~\eqref{eq:SJetag}, to relation~\eqref{eq:Rgen3} gives the identity
$$
\begin{aligned}
0&=s_{34}:=\\
&(\tc{0 & 0 & 0 & 0}{0 & 0 & 0 & 0}\tc{0 & 0 & 0 & 0}{ 1 & 1 & 1 & 1}\tc{0 & 0 & 1 & 1}{0 & 1 & 0 & 0}\tc{0 & 0 & 1 & 1}{1 & 0 & 1 & 1}
\tc{1 & 1 & 0 & 0}{0 & 0 & 0 & 1}\tc{1 & 1 & 0 & 0}{ 1 & 1 & 1 & 0}\tc{1 & 1 & 1 & 1}{0 & 1 & 0 & 1}\tc{1 & 1 & 1 & 1}{1 & 0 &  1 &  0})^{1/2}\\
&\pm(\tc{1 & 0 & 1 & 0}{0 & 0 & 0 & 0}\tc{1 & 0 & 1 & 0}{1 & 1 & 1 & 1}\tc{1 &  0 & 0 & 1}{0 & 1 & 0 & 0}\tc{1 & 0 & 0 & 1}{1 & 0 & 1 & 1}\tc{0 & 1 & 1 & 0}{0 & 0 & 0 & 1}\tc{0 & 1 & 1 & 0}{1  & 1 & 1 & 0}\tc{0 & 1 & 0 & 1}{0 & 1 & 0 & 1}\tc{0 & 1 & 0 &1}{1 & 0 & 1 & 0} )^{1/2}\\
&\pm (\tc{0 & 0 & 0 & 0}{0 & 0 & 1 & 1}\tc{0 & 0 & 0 & 0}{1 & 1 & 0 & 0}\tc{0 & 0 & 1 & 1}{0 & 1 & 1 & 1}\tc{0 & 0 & 1 & 1}{1 & 0 & 0 & 0}
\tc{1 & 1 & 0 & 0}{0 & 0 & 1 & 0}\tc{1 & 1 & 0 & 0}{1 & 1 & 0 & 1}\tc{1 & 1  & 1 & 1}{0 & 1  & 1 & 0}\tc{1 & 1 & 1 & 1}{1 & 0 & 0 & 1})^{1/2}.
\end{aligned}
$$
Denoting $RR_1,RR_2,RR_3$ the three degree 8 monomials in theta constants appearing here, we now use the expansion~\eqref{eq:expand} for each theta constant involved in $s_{34}$, computing up to linear order terms in $\tau_{ab}$, for all $t_m$ fixed. Note that for each monomial $RR_i$ four of the theta characteristics involved have all columns being even $1$-characteristics, and the remaining four theta characteristics have precisely two columns equal to $\chars11$. As discussed in the previous section, for those 4 theta constants where all columns are even, the lowest degree term of the expansion is the constant term, while for the remaining 4 theta constants the lowest degree term is linear, and equal to a multiple of $\tau_{ab}$, where the columns $\chars{\ep_a}{\de_a}=\chars{\ep_b}{\de_b}=\chars11$. For example for $RR_1$ in the 6th characteristic involved we get the term with $\tau_{12}$, in the 7th characteristic --- the term with $\tau_{24}$, in the 4th --- $\tau_{34}$, and in the 8th --- $\tau_{13}$.  We thus compute the lowest degree term of the expansion of $RR_1$ near $\calD_4$ to be the product of these four linear terms, and the four constant terms from the expansion of the other theta constants. By checking that the number of times in each product $RR_i$ that each column $\chars{\ep_m}{\de_m}$ is equal to each of the even $1$-characteristics is equal to two, this gives for the lowest degree term
$$
\begin{aligned}
 RR_1=&(2\pi i)^{-4} \tau_{12}\tau_{24}\tau_{34}\tau_{13}\cdot \\ &\cdot\prod_{m=1}^4\Big((\tc00(t_m)\tc01(t_m)\tc10(t_m)\Big)^2\cdot\prod_{m=1}^4\left(\frac{\partial\tc{1}{1}(t_m,z)} {\partial z}|_{z=0}\right)^2.
\end{aligned}
$$
The lowest order terms of $RR_2$ and $RR_3$ are similar. They have exactly the same factor in theta constants and derivatives, while the entries of the period matrix that appear are
$$
\tau_{13}\tau_{23}\tau_{24}\tau_{14}
$$
from the 2nd, 6th, 7th, and 4th theta constants appearing in the product $RR_2$, and similarly
$$
\tau_{14}\tau_{34}\tau_{12}\tau_{23}
$$
for $RR_3$. Using Jacobi's triple product identity~\eqref{eq:tripleproduct}, we see that the overall theta factor in each of these is simply equal to the product
$$
 c:=\prod_{m=1}^4\prod_{\chars\ep\de\in K_1^+}\theta^4\chars\ep\de(t_m),
$$
which is non-zero for any $t_1,t_2,t_3,t_4$ in the upper half-plane. Up to this common factor, the square root of the lowest degree term of the expansion of each $RR_i$ is then equal to the corresponding summand in the Poincar\'e relation~\eqref{eq:Poin} for the quadruple $(1234)$. Thus altogether we have computed the lowest order term of the expansion:
$$
s_{34}=c \left(\pm(\tau_{34}\tau_{12}\tau_{24}\tau_{13})^{1/2}\pm( \tau_{13}\tau_{14}\tau_{23}\tau_{24})^{1/2}\pm (\tau_{12}\tau_{14}\tau_{23}\tau_{34})^{1/2}\right)+O(\ep^3).
$$
Recall now that in genus 4 the identity $S_{34}$ is obtained as a product of four factors of the form $s_{34}$, with different choices of signs. The expansion of each such factor near the diagonal is as above, with suitable choices of signs, and thus the expansion of the product of the four factors gives exactly the Poincar\'e's relation in its polynomial form~\eqref{eq:Pconjugates}.
\end{proof}
\begin{rem}\label{rem:etag}
Many other choices of $3$-characteristics for the genus 3 Riemann's quartic relation, instead of~\eqref{eq:Rgen3}, and different choices of $\eta$ would also yield a Schottky-Jung identity that would work in the above proof. One just needs to make sure that in each resulting $RR_i$ precisely four theta characteristics have all columns even, and the remaining four characteristics have precisely two columns equal to $\chars11$. For generalizing to higher genus, to be able to deal with the small Schottky locus rather than the big Schottky, we need to use the same two-torsion point $\eta$ for all proportionalities, and thus in our approach the 3rd, 4th,\dots, $g$'th columns of $\eta$ should all be equal, so that permuting them would leave $\eta$ invariant. The computationally simplest choice of such a two-torsion point could be $\chars{000\ldots 0}{110\ldots 0}$, but using a computer we checked that for all possible Riemann's quartic relations in genus 3, applying the Schottky-Jung proportionalities for the two-torsion point $\chars{0000}{1100}$ to them cannot give in genus 4 a Schottky-Jung identity where the columns of $RR_1,RR_2,RR_3$ satisfy this necessary combinatorial property. Thus our choice of $\eta_g$ is the simplest possible.
\end{rem}
We now generalize this computation to arbitrary genus; note that this of course uses the specifics of our choice of quartic identities $R_{jk}$ and of the corresponding Schottky-Jung identities $S_{jk}$.
\begin{proof}[Proof of~\Cref{thm:Poincareholds} for arbitrary genus]
The proof is again by computing the lowest order terms of the appropriate expansions, and crucially noticing that the cases when some of the $\ep_5,\ldots,\ep_g$ are equal to $1$ lead to higher order terms, as some of the theta constants involved in $s_{34}$ have then more $\chars11$ columns. Recall that $S_{34}$ is the product of $2^{3\cdot 2^{g-4}-1}$ terms of the form
$$
\begin{aligned}
s_{34}:=  &\sum_{\ep \in(\ZZ/2\ZZ)^{g-4}}
a_\ep\left(\tc{E & 0 & 0 & 0&\ep }{0 & 0 & 0 & 0&{\bf 0} }\tc{E & 0 & 0 & 0&\ep }{ 1 & 1 & 1 & 1&{\bf 1} }
\tc{E & 0 & 1 & 1&\ep }{0 & 1 & 0 & 0&{\bf 0} }\tc{E & 0 & 1 & 1&\ep }{1 & 0 & 1 & 1&{\bf 1} }\right.\cdot\\
&\left.\cdot\,\tc{1+E & 1 & 0 & 0&\ep }{0 & 0 & 0 & 1&{\bf 0} }\tc{1+E & 1 & 0 & 0&\ep }{ 1 & 1 & 1 & 0&{\bf 1} }
\tc{1+E & 1 & 1 & 1&\ep }{0 & 1 & 0 & 1&{\bf 0} }\tc{1+E & 1 & 1 & 1&\ep }{1 & 0 &  1 &  0&{\bf 1} }\right)^{1/2}\\
+&b_\ep\left(\tc{1+E & 0 & 1 & 0&\ep }{0 & 0 & 0 & 0&{\bf 0} }\tc{1+E & 0 & 1 & 0&\ep }{1 & 1 & 1 & 1&{\bf 1} }
\tc{1+E &  0 & 0 & 1&\ep }{0 & 1 & 0 & 0&{\bf 0} }\tc{1+E & 0 & 0 & 1&\ep }{1 & 0 & 1 & 1&{\bf 1} }\right.\cdot\\
&\ \ \left.\cdot\,\tc{E & 1 & 1 & 0&\ep }{0 & 0 & 0 & 1&{\bf 0} }\tc{E & 1 & 1 & 0&\ep }{1  & 1 & 1 & 0&{\bf 1} }
\tc{E & 1 & 0 & 1&\ep }{0 & 1 & 0 & 1&{\bf 0} }\tc{E & 1 & 0 &1&\ep }{1 & 0 & 1 & 0&{\bf 1} } \right)^{1/2}\\
+&c_\ep \left(\tc{E & 0 & 0 & 0&\ep }{0 & 0 & 1 & 1&{\bf 0} }\tc{E & 0 & 0 & 0&\ep }{1 & 1 & 0 & 0&{\bf 1} }
\tc{E & 0 & 1 & 1&\ep }{0 & 1 & 1 & 1&{\bf 0} }\tc{E & 0 & 1 & 1&\ep }{1 & 0 & 0 & 0&{\bf 1} }\right.\cdot\\
&\ \ \left.\cdot\,\tc{1+E & 1 & 0 & 0&\ep }{0 & 0 & 1 & 0&{\bf 0} }\tc{1+E & 1 & 0 & 0&\ep }{1 & 1 & 0 & 1&{\bf 1} }
\tc{1+E & 1  & 1 & 1&\ep }{0 & 1  & 1 & 0&{\bf 0} }\tc{1+E & 1 & 1 & 1&\ep }{1 & 0 & 0 & 1&{\bf 1} }\right)^{1/2},
\end{aligned}$$
for different choices of the signs $a_\ep,b_\ep,c_\ep$.
If $\ep=\bf 0$, then for all theta constants involved in the three corresponding summands in $s_{34}$, in columns $5,\ldots,g$ only characteristics $\chars00$ and $\chars01$ appear. Thus the lowest order term for the expansion near $\calD_g$ of each theta constant involved is simply equal to the lowest term of the expansion of the genus 4 theta constant with the first four columns as characteristics, times the suitable product of theta constants of $t_5,\ldots,t_g$. Since in each of the three summands each of the columns $5,\ldots,g$ takes each of the values $\chars00$ and $\chars01$ exactly 4 times, the lowest order term of the expansion near $\calD_g$ of each of the three terms with $\ep=\bf 0$ is equal to the expansion near $\calD_4$ of the corresponding term in genus 4, times the factor of $\prod_{m=5}^g\theta^2\chars00(t_m)\theta^2\chars01(t_m)$ --- which is the same for these three terms.

If any of the $\ep_5,\ldots,\ep_g$ are equal to 1, then in each of the three summands appearing in the expression for $s_{34}$ for such $\ep$, four of the theta characteristics --- those that have ${\bf 1}$ on the bottom --- will have extra columns equal to $\chars11$. Thus the lowest order term for the expansion of such summand near $\calD_g$ would be of higher order, as explained after formula~\eqref{eq:expand}.

Hence in the expansion  of $s_{34}$ near $\calD_g$ the only terms of degree 4 in $\tau_{ab}^{1/2}$ arise from the case $\ep_5=\ldots=\ep_g=0$, and they are equal to the expansion of the expression for $s_{34}$ in genus $4$, times $\prod_{m=5}^g\theta^2\chars00(t_m)\theta^2\chars01(t_m)$. Thus the lowest order term of the expansion near $\calD_g$ of $S_{34}$ --- which is a product of $2^{3\cdot 2^{g-4}-1}$ terms of the form $s_{34}$, with different choices of signs --- is equal to the lowest order term of the expansion of $S_{34}$ in genus 4, taken to power $2^{3\cdot 2^{g-4}-3}$ (corresponding to the choices of signs $a_\ep,b_\ep,c_\ep$ for all $\ep\ne{\bf 0}$, which do not change the lowest order term of $s_{34}$), times a power of $\prod_{m=5}^g\tc00(t_m)\tc01(t_m)$. Since this product of genus one theta constants is never zero, the vanishing of the lowest order term of the expansion near $\calD_g$ of $S_{34}$ implies the vanishing of the power of the lowest order term that appears in the Poincar\'e relation for $(1234)$. Thus the Poincar\'e relation for the quadruple $(1234)$ holds, up to terms of higher order.
By interchanging the columns $(1234)$ and $(ijkl)$ of the characteristics involved, we note that the correspondingly permuted Schottky-Jung identity implies the Poincar\'e identity for any given quadruple $(ijkl)$.
\end{proof}
\begin{rem}\label{rem:noRiemann}
The proof above shows why applying the Schottky-Jung proportionalities for $\eta_g$ to Riemann's quartic relations directly would not work. Indeed, if one were to take the genus 3 Riemann's quartic relation~\eqref{eq:Rgen3} and extend the genus 3 characteristics $\chars\al\be=\chars{011}{100}, \chars\sigma\mu=\chars{100}{001}$ to genus $g-1$ simply by zero characteristics (or in fact in any other way), then instead of $R_{34}$, where only the sum over top genus $(g-4)$-characteristics $\ep$ is taken, we would have a quartic identity where the sum over all $\chars\ep\de\in K_{g-4}$ is taken. Then the lowest degree terms of the expansion of $s_{34}$ near the diagonal would arise if no column $\chars{\ep_m}{\de_m}$ or $\chars{\ep_m}{\de_m+1}$ is equal to $\chars11$; thus the lowest degree terms would arise from the cases when all $\chars{\ep_m}{\de_m}$ are equal to $\chars00$ or $\chars01$.

But then for two monomials appearing in $s_{34}$, where all columns are the same, except say for $\chars{\ep_g}{\de_g}=\chars00$ in one monomial versus $\chars{\ep_g}{\de_g}=\chars01$ in the other, the lowest degree term of the expansion is exactly the same. Indeed, these two lowest degree terms give the same expression in $\tau_{ab}$, for $1\le a<b\le 4$, times the same product of theta constants in variables $t_5,\ldots,t_{g-1}$, but also times the same factor of $\theta^2\chars00(t_g)\theta^2\chars01(t_g)$ in both cases --- as in each case both of these characteristics appear four times under the square root. However, as the signs of the individual square roots of degree 8 monomials cannot be determined, it could be that these two lowest degree terms simply cancel. Thus it could be that the desired lowest degree term of $s_{34}$ would cancel out, and the argument above would fail. Using Fay's (generalized) Riemann theta formula, or equivalently the doubling trick (see~\Cref{rem:doubling}) to obtain $R_{34}$ allows us to overcome this crucial difficulty.
\end{rem}
\bigskip
For arbitrary genus $g$, there are $\binom{g}{4}$ Poincar\'e relations. In particular, already for $g=5$ only 3 out of the 5 Poincar\'e relations can be locally independent for dimension reasons. Following Rauch's ideas~\cite[p.228]{rafabook} for $g=5$, one can exhibit for any $g\ge 4$ a collection of
$$
 \tfrac{(g-3)(g-2)}{2}=\tfrac{g(g+1)}2-(3g-3)=\dim\calA_g-\dim\calJ_g
$$
Poincar\'e relations that are locally functionally independent.
\begin{prop}\label{lm:Pindep}
For any $g\ge 4$ the $(g-3)(g-2)/2$ Poincar\'e relations \eqref{eq:Poin}, corresponding to the quadruples of the form $(12jk)$ for all $3\le j<k\le g$, are functionally independent in a neighborhood of a generic $\tau\in\calD_g$.

In particular, the codimension of the locus in $\calA_g$ determined by these Poincar\'e relations is locally equal to $\tfrac{(g-3)(g-2)}2$ near such $\tau$, and the dimension at $\tau$ of the locus of period matrices satisfying these Poincar\'e relations is equal to $3g-3$.
\end{prop}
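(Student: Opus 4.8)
The plan is to compute the rank of the Jacobian of the map sending a period matrix $\tau$ to the tuple of left--hand sides of \eqref{eq:Pconjugates} for all quadruples $(12jk)$, $3\le j<k\le g$; \emph{functional independence} of the Poincar\'e relations is exactly the assertion that this rank equals $(g-3)(g-2)/2$ at a generic point. For $3\le j<k\le g$ write $F_{jk}$ for the degree $8$ polynomial $P_1^2+P_2^2+P_3^2-2P_1P_2-2P_1P_3-2P_2P_3$ attached to the quadruple $(12jk)$, so that $\sqrt{P_1},\sqrt{P_2},\sqrt{P_3}$ are the three terms of the Poincar\'e relation \eqref{eq:Poin} for $(12jk)$, which unwinds to
\[
P_1=\tau_{12}\tau_{2j}\tau_{jk}\tau_{1k},\qquad P_2=\tau_{12}\tau_{1j}\tau_{jk}\tau_{2k},\qquad P_3=\tau_{1j}\tau_{1k}\tau_{2j}\tau_{2k}.
\]
The key structural observation is that $F_{jk}$ involves only the six off-diagonal entries $\tau_{ab}$ with $\{a,b\}\subseteq\{1,2,j,k\}$, and that among these the unique entry with both indices $\ge3$ is $\tau_{jk}$ itself. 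Hence, looking only at the $(g-3)(g-2)/2$ columns of the Jacobian indexed by the entries $\tau_{j'k'}$ with $3\le j'<k'\le g$, the resulting square submatrix $\bigl(\partial F_{jk}/\partial\tau_{j'k'}\bigr)$ is \emph{diagonal}, with $(jk)$-entry $\partial F_{jk}/\partial\tau_{jk}$; so it suffices to check that each $\partial F_{jk}/\partial\tau_{jk}$ is not identically zero.

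This is settled by a one-line computation. Since $P_1$ and $P_2$ are linear in $\tau_{jk}$ while $P_3$ does not involve it,
\[
\frac{\partial F_{jk}}{\partial\tau_{jk}}=\frac{2}{\tau_{jk}}\bigl((P_1-P_2)^2-P_3(P_1+P_2)\bigr)=\frac{2}{\tau_{jk}}\bigl(P_1^2-2P_1P_2+P_2^2-P_1P_3-P_2P_3\bigr),
\]
and of the five terms in the last bracket only $P_1^2$ is free of the variable $\tau_{1j}$ (because $P_2$ and $P_3$ each contain $\tau_{1j}$), so setting $\tau_{1j}=0$ leaves the nonzero monomial $P_1^2=(\tau_{12}\tau_{2j}\tau_{jk}\tau_{1k})^2$; thus $\partial F_{jk}/\partial\tau_{jk}$ is a nonzero polynomial. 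Since a nonzero polynomial vanishes on no open subset of $\calH_g$, and a finite collection of nonzero polynomials is simultaneously nonzero on a dense open set, the diagonal submatrix above is invertible at a generic point of every neighborhood of every $\tau\in\calD_g$. Consequently the full Jacobian has rank $(g-3)(g-2)/2$ at such points, which establishes that the Poincar\'e relations $\{F_{jk}=0\}$, $3\le j<k\le g$, are functionally independent near a generic $\tau\in\calD_g$.

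The final two statements are then formal. The locus under consideration is cut out in $\calA_g$ by the $(g-3)(g-2)/2$ functions $F_{jk}$, so each of its components has codimension at most $(g-3)(g-2)/2$, and the functional independence just proved shows the codimension is exactly $(g-3)(g-2)/2$ near $\tau$; since $\dim\calA_g=\tfrac{g(g+1)}2$, the locus has dimension $3g-3$ there. (That this dimension is attained at $\tau$ itself, and not merely generically, can be seen by fixing $\tau_{12}$, the entries $\tau_{1m}$ and $\tau_{2m}$ for $3\le m\le g$, and all diagonal entries at generic values: each $F_{jk}=0$ then becomes a genuine quadratic equation in the single variable $\tau_{jk}$ --- its leading coefficient $\tau_{12}^2(\tau_{2j}\tau_{1k}-\tau_{1j}\tau_{2k})^2$ being generically nonzero --- and these quadratics lie in pairwise disjoint variables, so the common zero locus meets such a slice in finitely many points, forcing $\dim\le(2g-3)+g=3g-3$ everywhere.) I do not expect a genuine obstacle here: the whole argument turns on the combinatorial observation that a Poincar\'e relation $F_{jk}$ depends on the interior entry $\tau_{j'k'}$ (with $j',k'\ge3$) only when $(j',k')=(jk)$, which triangularizes the relevant Jacobian block and reduces matters to the single non-vanishing $\partial F_{jk}/\partial\tau_{jk}\not\equiv0$.
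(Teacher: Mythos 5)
Your proof is correct. The paper itself does not carry out a direct argument for this proposition: it remarks that a direct proof is possible and then derives the statement as a by-product of the functional independence of the modular forms $S_{jk}$ established in Section 6. Your write-up is exactly the direct proof the paper omits, and it rests on the same structural observation that drives the paper's Section~6 computation: the Poincar\'e polynomial $F_{jk}$ attached to the quadruple $(12jk)$ involves an entry $\tau_{j'k'}$ with $3\le j'<k'\le g$ only when $(j',k')=(j,k)$, so the block $\bigl(\partial F_{jk}/\partial\tau_{j'k'}\bigr)$ is diagonal and everything reduces to $\partial F_{jk}/\partial\tau_{jk}\not\equiv 0$. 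What your version buys is self-containedness: the proposition no longer depends on the expansion of the $S_{jk}$ near $\calD_g$, on the non-vanishing of the theta-constant prefactors, or on the sign bookkeeping of Section~5; instead you give the closed form $\tfrac{2}{\tau_{jk}}\bigl((P_1-P_2)^2-P_3(P_1+P_2)\bigr)$ for the diagonal entries and a concrete certificate of non-vanishing (setting $\tau_{1j}=0$ kills every term except $P_1^2$). The paper's route buys economy, since one computation yields both the Main Theorem and this proposition. Your closing slice argument --- fixing the diagonal and the first two rows of off-diagonal entries so that each $F_{jk}=0$ becomes a quadratic in the single variable $\tau_{jk}$ with generically non-zero leading coefficient $\tau_{12}^2(\tau_{2j}\tau_{1k}-\tau_{1j}\tau_{2k})^2$, the quadratics living in pairwise disjoint variables --- is a genuine addition: it pins down the dimension $3g-3$ of the zero locus at $\tau$ itself rather than only at a generic nearby point, a point the paper leaves implicit.
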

While it is possible to give a direct proof of this statement by showing that locally given   $t_1,\ldots,t_g $, $\tau_{12},\tau_{13}, \tau_{23},\ldots,\tau_{1g},\tau_{2g}$, all the $\tau_{ab}$ can be determined from this set of Poincar\'e relations, this Proposition follows from the proof of local functional independence of the corresponding $S_{jk}$, given in the next section.

\section{Schottky-Jung and Poincar\'e: proof of the main theorem}
In this  section we  will prove the main theorem, by showing that the lowest order terms of the expansion of $S_{jk}$ near $\calD_g$ give a collection of functionally independent   relations.
\begin{proof}[Proof of the main theorem]
We first consider the genus 4 case. In this case we only have one identity $S_{34}$, and~\Cref{lm:SR} yields $S_{34}=SJ^{\eta_4}(R_{34})$.
The full Taylor series expansion of  $S_{34}$ in the variables $\tau_{jk}$, near the point $\tau=\diag(t_1,t_2,t_3,t_4)\in\calD_g$, for all $t_m$ fixed, is a series such that its lowest degree term has degree $8$ in the $\tau_{jk}$. By the computations in the previous section, this lowest degree term is equal to a non-zero multiple of the symmetrization ~\eqref{eq:Pconjugates} of the Poincar\'e relation. Thus the whole series is not identically zero, and consequently its zero locus is of codimension one in $Th(\calA_g(4,8))$ --- hence of dimension 9. Since the zero locus of $S_{34}$ contains the irreducible 9-dimensional locus $\calJ_4$, it follows that $\calJ_4$ must be an irreducible component of this zero locus.

For arbitrary genus $g$, by the proof of~\Cref{thm:Poincareholds} we know that the lowest order terms of the expansions of $S_{jk}$ near $\calD_g$ are non-zero multiples of powers of the Poincar\'e relations for quadruples $(12jk)$. To prove that $S_{jk}$ are functionally independent near a generic point of $\calD_g$, we can follow  the idea of the argument given in~\cite[pp.~227ff]{rafabook} (which is possible now that we have found Schottky-Jung identities whose expansions give Poincar\'e relations, and now that we have handled the issue of signs, and ascertained suitable non-vanishing correctly).

Consider the  jacobian  matrix of derivatives of the   $S_{jk}$ with respect to the  variables $\tau_{ab} $ with  $3\leq a<b\leq g$, evaluated very close to a generic point of $\calD_g$ --- i.e.~for $\tau_{ii}=t_i$ generic, for all $1\le i\le g$, and for $0<|\tau_{ab}|<\ep\ll 1$, for any $1\leq i<j\leq g$.  To compute $\partial S_{34}/\partial \tau_{ab}$ for $3\leq a<b\leq g$, note that the lowest degree term is always zero except for the case of $\partial S_{34}/\partial \tau_{34}$. Since each $S_{jk}$ is obtained from $S_{34}$ by permuting the columns, it follows that to lowest order the only non-zero partial derivative is $\partial S_{jk}/\partial \tau_{jk}$. Thus the jacobian matrix is diagonal, plus terms of higher order in the off-diagonal entries $\tau_{ab}$ of the period matrix. Since we have assumed that all $|\tau_{ab}|<\ep$, the determinant of this jacobian matrix is equal to the jacobian of the lowest order diagonal matrix, plus some $O(\ep)$. Since the determinant of the diagonal matrix is non-zero, for sufficiently small $\ep$ it thus follows that the jacobian determinant $\det (\partial S_{jk}/\partial\tau_{ab})$ is non-zero, and thus that the equations $S_{jk}$ are functionally independent.
\end{proof}


\begin{thebibliography}{APSM17}

\bibitem[Acc83]{acc}
R.~Accola.
\newblock On defining equations for the Jacobian locus in genus five.
\newblock {\em Proc. Amer. Math. Soc.}, 89(3):445--448, 1983.

\bibitem[AC19]{agch}
D.~Agostini and L.~Chua,
\newblock Computing theta functions with Julia.
\newblock {\em Preprint arxiv:1906.06507}.

\bibitem[AK97]{ak}
R.~Adin and Y.~Kopeliovich.
\newblock Short eigenvectors and multidimensional theta functions.
\newblock {\em Linear Algebra and its Applications}, 145(1):89--99, 2017.

\bibitem[ADC84]{adc}
E.~Arbarello and C.~De Concini.
\newblock On a set of equations characterizing Riemann matrices.
\newblock {\em Ann. of Math.}, 120(1):119--140, 1984.

\bibitem[APSM17]{aps}
R.~Auffarth, G.~P. Pirola, and R.~Salvati~Manni.
\newblock Torsion points on theta divisors.
\newblock {\em Proc. Amer. Math. Soc.}, 257:49--63, 1997.

\bibitem[Bea89]{beprymsurvey}
A.~Beauville.
\newblock Prym varieties: a survey.
\newblock In {\em Theta functions---{B}owdoin 1987, {P}art 1 ({B}runswick,
  {ME}, 1987)}, volume~49 of {\em Proc. Sympos. Pure Math.}, pages 607--620.
  Amer. Math. Soc., Providence, RI, 1989.

\bibitem[Don87a]{donagiscju}
R.~Donagi.
\newblock Big {S}chottky.
\newblock {\em Invent. Math.}, 89(3):569--599, 1987.

\bibitem[Don87b]{donagiintermjac}
R.~Donagi.
\newblock Non-{J}acobians in the {S}chottky loci.
\newblock {\em Ann. of Math.}, 126(1):193--217, 1987.

\bibitem[Don88]{dosurvey}
R.~Donagi.
\newblock The {S}chottky problem.
\newblock In {\em Theory of moduli ({M}ontecatini {T}erme, 1985)}, volume 1337
  of {\em Lecture Notes in Math.}, pages 84--137. Springer, Berlin, 1988.



\bibitem[Far12]{faprymsurvey}
G.~Farkas.
\newblock Prym varieties and their moduli.
\newblock In {\em Contributions to algebraic geometry}, EMS Ser. Congr. Rep.,
  pages 215--255. Eur. Math. Soc., Z\"urich, 2012.

\bibitem[Far70]{fasc}
H.~Farkas.
\newblock On the {S}chottky relation and its generalization to arbitrary genus,
\newblock {\em Ann. of Math.}, 92:57--86, 1970.

\bibitem[Far89]{farkashscju}
H.~Farkas.
\newblock Schottky-{J}ung theory.
\newblock In {\em Theta functions---{B}owdoin 1987, {P}art 1 ({B}runswick,
  {ME}, 1987)}, volume~49 of {\em Proc. Sympos. Pure Math.}, pages 459--483.
  Amer. Math. Soc., Providence, RI, 1989.

\bibitem[FR70]{farascju}
H.~Farkas and H.~Rauch.
\newblock Period relations of {S}chottky type on {R}iemann surfaces.
\newblock {\em Ann. of Math.}, 92:434--461, 1970.

\bibitem[Fay73]{faytheta}
J.~Fay.
\newblock Theta Functions on Riemann Surfaces.
\newblock {\em Lecture Notes in Mathematics} 352, 1973.

\bibitem[Fay79]{fayriemann}
J.~Fay.
\newblock On the {R}iemann-{J}acobi formula.
\newblock {\em Nachr. Akad. Wiss. G\"ottingen Math.-Phys. Kl. II}, (5):61--73,
  1979.

\bibitem[Fre83]{freitaggen4}
E.~Freitag.
\newblock Die {I}rreduzibilit\"at der {S}chottkyrelation ({B}emerkung zu einem
  {S}atz von {J}. {I}gusa).
\newblock {\em Arch. Math. (Basel)}, 40(3):255--259, 1983.

\bibitem[FSM16]{fsm}
E.~Freitag, R.~Salvati Manni.
\newblock On the variety associated to the ring of theta constants in genus 3.
\newblock {\em preprint}, arXiv:1606.04468.

\bibitem[Gar51]{garabedian}
P.~Garabedian.
\newblock Asymptotic identities among periods of integrals of the first kind.
\newblock {\em Amer. J. Math.}, 73:107--121, 1951.

\bibitem[vGe84]{vgeemenscju}
B.~van Geemen.
\newblock Siegel modular forms vanishing on the moduli space of curves.
\newblock {\em Invent. Math.}, 78(2):329--349, 1984.

\bibitem[vGe98]{vgsurvey}
B.~van Geemen.
\newblock The {S}chottky problem and second order theta functions.
\newblock {\em Aportaciones Mat. Investig.} 13:41--84.
\newblock Soc. Mat. Mexicana, M\'exico, 1998.

\bibitem[Gru12]{grschottky}
S.~Grushevsky.
\newblock The {S}chottky problem.
\newblock In {\em Current developments in algebraic geometry}, volume~59 of
  {\em Math. Sci. Res. Inst. Publ.}, pages 129--164. Cambridge Univ. Press,
  Cambridge, 2012.

\bibitem[Igu72]{igusabook}
J.-I. Igusa.
\newblock {\em Theta functions}, volume 194 of {\em Grundlehren der
  Mathematischen Wissenschaften}.
\newblock Springer-Verlag, New York, 1972.

\bibitem[Igu81]{igusach}
J.-I. Igusa.
\newblock   {S}chottky's invariant and quadratic Forms.
\newblock {\em Christoffel Symposium, Aachen , Birka\"user Verlag } 353--362, 1981.

\bibitem[Igu82a]{igusagen4}
J.-I. Igusa.
\newblock On the irreducibility of {S}chottky's divisor.
\newblock {\em J. Fac. Sci. Univ. Tokyo Sect. IA Math.}, 28(3):531--545, 1982.

\bibitem[Igu82b]{iguproblems}
J.-I. Igusa.
\newblock Problems on abelian functions at the time of {P}oincar\'e and some at present.
\newblock {\em Bull. Amer. Math. Soc.}, 6(2):161--174, 1982.

\bibitem[Kri10]{krichevertrisecant}
I.~Krichever.
\newblock Characterizing {J}acobians via trisecants of the {K}ummer variety.
\newblock {\em Ann. of Math. (2)}, 172(1):485--516, 2010.

\bibitem[Mum74]{mumfordprym}
D.~Mumford.
\newblock Prym varieties. {I}.
\newblock In {\em Contributions to analysis (a collection of papers dedicated
  to {L}ipman {B}ers)}, pages 325--350. Academic Press, New York, 1974.

\bibitem[Poi1895]{poincare}
H.~Poincar\'e.
\newblock Remarques diverses sur les fonctions ab\'eliennes.
\newblock {\em J. de Math., ser 5}, 1:219--314, 1895.

\bibitem[Rau71]{rauchschpo}
H.~Rauch.
\newblock Schottky implies {P}oincar\'e.
\newblock In {\em Advances in the {T}heory of {R}iemann {S}urfaces ({P}roc.
  {C}onf., {S}tony {B}rook, {N}.{Y}., 1969)}, pages 355--364. Princeton Univ.
  Press, Princeton, N.J., 1971.

\bibitem[RF74]{rafabook}
H.~Rauch and H.~Farkas.
\newblock {\em Theta functions with applications to {R}iemann surfaces}.
\newblock The Williams\thinspace \&\thinspace Wilkins Co., Baltimore, Md.,
  1974.

\bibitem[Rie1857]{riemann1}
B.~Riemann.
\newblock Theorie der Abel'schen Functionen.
\newblock {\em J. Reine Angew. Math}, 54:101--155, 1857.

\bibitem[Rie1866]{riemann2}
B.~Riemann.
\newblock \"Uber das Verschwinden der $\theta$-Functionen.
\newblock {\em J. Reine Angew. Math}, 65:161--172, 1866.

\bibitem[SM85]{smrelations}
R.~Salvati~Manni.
\newblock On the dimension of the vector space {${\mathbb C}[\theta_m]_4$}.
\newblock {\em Nagoya Math. J.}, 98:99--107, 1985.

\bibitem[Sch1888]{schottky}
F.~Schottky.
\newblock Zur {T}heorie der abelschen {F}unctionen vor vier {V}ariablen.
\newblock {\em J. Reine Angew. Math.}, 102:304--352, 1888.

\bibitem[SJ1909]{scju}
F.~Schottky and H.~Jung.
\newblock Neue {S}atze uber {S}ymmetralfunktionen und die {A}bel'schen
  {F}unktionen der {R}iemann'schen {T}heorie.
\newblock {\em Akad. Wiss. Berlin, Phys. Math. Kl.}, pages 282--297, 1909.

\bibitem[Sig13]{siegel}
C.~Siegel.
\newblock  The Schottky problem in genus five.
\newblock {\em preprint} 	arXiv:1302.5946.

\bibitem[Shi86]{shiota}
T.~Shiota.
\newblock Characterization of {J}acobian varieties in terms of soliton
  equations.
\newblock {\em Invent. Math.}, 83(2):333--382, 1986.

\bibitem[Wel84]{welters}
G.~Welters.
\newblock
 A criterion for Jacobi varieties.
\newblock {\em Ann. of Math.}, 120: 497--504,  1984.
\end{thebibliography}
\end{document}